\documentclass[11pt,a4paper,draft]{article}

\usepackage{amsmath,amsthm,amsopn,amsfonts,amssymb,dsfont,color}
\usepackage[dvips]{graphicx}
\usepackage{psfrag}
\usepackage{enumerate}

\usepackage{a4,a4wide}

\def\ep{{\varepsilon}}
\def\R{\mathbb R}

\def\J{\widehat J}

\def\E{e^{t(\J(\xi)-1)}}
\def\uzero{\widehat{u_0}}
\def\dirac{\delta _0}

\newtheorem{theo}{\textbf{Theorem}}[section]

\newtheorem{lem}[theo]{\textbf{Lemma}}

\newtheorem{cor}[theo]{\textbf{Corollary}}

\newtheorem{assumption}[theo]{\textbf{Assumption}}
\newtheorem{rem}[theo]{\textbf{Remark}}

\newcommand{\monstre}{\left(1-\int _{\vert y\vert\geq m\tau ^{1/\beta}}\psi (T,x-y)dy\right)^{-p}-1}

\title{ Fujita blow up phenomena and hair trigger effect: the role of dispersal tails}
\date{}

\begin{document}

\maketitle

\begin{center}
{\large\bf Matthieu Alfaro \footnote{ IMAG, Universit\'e de
Montpellier, CC051, Place Eug\`ene Bataillon, 34095 Montpellier
Cedex 5, France. E-mail: matthieu.alfaro@umontpellier.fr}}\\
[2ex]

\end{center}

%\vspace{15pt}

\tableofcontents

\vspace{10pt}

\begin{abstract} We consider the nonlocal diffusion equation $\partial _t u=J*u-u+u^{1+p}$ in the whole of $\R ^N$. We prove that the Fujita exponent  dramatically depends on the behavior of the Fourier transform of the kernel $J$ near the origin, which is linked to the tails of $J$. In particular, for compactly supported  or exponentially bounded  kernels, the Fujita exponent is the same as that of the nonlinear Heat equation $\partial _tu=\Delta u+u^{1+p}$. On the other hand, for kernels with algebraic tails, the Fujita exponent  is either of  the Heat type or of some related Fractional type, depending on the finiteness of the second moment of $J$. As an application of the result in population dynamics models, we discuss the hair trigger effect for $\partial _t u=J*u-u+u^{1+p}(1-u)$.
 \\

\noindent{\underline{Key Words:} Blow up solution, global solution, Fujita exponent, nonlocal diffusion, dispersal tails, hair trigger effect.}\\

\noindent{\underline{AMS Subject Classifications:} 35B40 (Asymptotic behavior of solutions), 35B33 (Critical exponent), 45K05 (Integro partial diff eq), 47G20 (Integro diff oper).}
\end{abstract}

\section{Introduction}\label{s:intro}

In this work we consider solutions $u(t,x)$ to the nonlinear ($p>0$) integro partial differential equation
\begin{equation}
\label{eq}
\partial _t u=J*u-u+u^{1+p} \quad \text{ in } (0,\infty)\times \R^{N},
\end{equation}
in any dimension $N\geq 1$. Equation \eqref{eq} is supplemented with a nonnegative and nontrivial initial data, and we aim at determining the so-called {\it Fujita exponent} $p_F$, that is the value of $p$ that separates \lq\lq systematic blow up solutions'' from \lq\lq blow up solutions vs global and extincting solutions'' (see below for details). We shall prove that the Fujita exponent dramatically depends on the behavior of the Fourier transform of the kernel $J$ near the origin, which is linked to the tails of $J$. Depending on these tails, it turns out that the Fujita phenomenon in \eqref{eq} can be similar to  that of the nonlinear Heat equation, or to that of a related nonlinear Fractional equation.

As an application of our main result, we consider
\begin{equation}
\label{eq2}
\partial _t u=J*u-u+u^{1+p}(1-u) \quad \text{ in } (0,\infty)\times \R^{N},
\end{equation}
which serves as a population dynamics model where both long range dispersal (via the kernel $J$) and a weak Allee effect (via the degeneracy of the steady state $u\equiv 0$, due to $p>0$)  are taken into account. 
Depending on the balance between the tails of $J$ and the strength of the Allee effect, we discuss the so-called {\it hair trigger effect} --- meaning that any small perturbation from $u\equiv 0$ drive the solution to $u\equiv 1$--- or the possibility of extincting solutions.

\medskip

In his seminal work \cite{Fuj-66}, Fujita considered solutions $u(t,x)$ to the nonlinear Heat equation 
\begin{equation}
\label{eq-fujita}
\partial _t u=\Delta u+u^{1+p}\quad \text{ in } (0,\infty)\times \R^{N},
\end{equation}
supplemented with a nonnegative and nontrivial initial data. For such a problem, the Fujita exponent is $p_F=\frac 2 N$. Precisely,  if $0<p\leq p_F$ then any solution blows up in finite time; if $p>p_F$ then solutions with large initial data blow up in finite time whereas solutions with small initial data are global in time and go extinct as $t\to\infty$. For a precise statement we refer to \cite{Fuj-66} for the cases $0<p< p_F$ and $p>p_F$. The critical case $p=p_F$ is studied in \cite{Hay-73} when $N=1,2$, in \cite{Kob-Sir-Tun-77} when $N\geq 3$, and in \cite{Wei-81} via a direct and simpler approach. Let us observe that, as well-known, solutions to the Heat equation  $\partial _t u=\Delta u$ tend to zero as $t\to\infty$ like  $\mathcal O\left(t^{-\frac  N 2}\right)$, which is a formal  argument to guess $p_F=\frac 2 N$.

Since then, the Fujita phenomenon has attracted much interest and the literature on refinements of the results or on various local variants of equation \eqref{eq-fujita} is rather large. Let us mention for instance the works \cite{Wei-81}, \cite{Lev-90}, \cite{Den-Lev-00}, \cite{Mit-Pok-01}, or \cite{Qui-Sou-book} for an overview, and the references therein.

When the Laplacian diffusion operator is replaced  by the Fractional Laplacian, the situation is also well understood: the Fujita exponent for
\begin{equation}
\label{eq-sugitani}
\partial _t u=-(-\Delta)^{s/2}u+u^{1+p}\quad \text{ in } (0,\infty)\times \R^{N}, \quad 0<s\leq 2,
\end{equation}
is $p_F=\frac s N$. We refer to the work of Sugitani \cite{Sug-75}. See also, among others,  \cite{Bik-Lop-Wak-02}  for a probabilistic approach, and \cite{Gue-Kir-99} for a  variant of \eqref{eq-sugitani}. Let us observe that, as well-known,  solutions to   the Fractional diffusion equation $\partial _t u=-(-\Delta)^{s/2} u$ tend to zero as $t\to\infty$ like  $\mathcal O\left(t^{-\frac N s}\right)$, which is again a formal  argument to guess $p_F=\frac s N$.

As far as we know, much less is known for the nonlocal equation \eqref{eq}. Let us mention the work of Garc{\'{\i}}a-Meli{\'a}n  and Quir{\'o}s \cite{Gar-Qui-10} (and \cite{Yan-Zho-Zhe-14} for a variant) who treat the case of compactly suported dispersal kernel $J$. In such a situation, the Fujita exponent for \eqref{eq} is the same as that of \eqref{eq-fujita}, namely $p_F=\frac 2 N$. In order to take into account rare long-distance dispersal events, which are relevant in many population dynamics models (seeds dispersal for instance), we allow in this work kernels $J$ which have nontrivial tails. Two typical situations are when $J$ has (light) exponential tails or (heavy) algebraic tails, the latter case meaning
\begin{equation}
\label{tails}
J(x)\sim \frac C{\vert x\vert ^{\alpha}} \quad \text{ as } \vert x\vert \to \infty, \quad\text{ with } \alpha >N.
\end{equation}
Owing to the decay of solutions to $\partial _tu=J*u-u$ proved by Chasseigne, Chaves and Rossi  \cite{Cha-Cha-Ros-06}, we 
 guess that $p_F=\frac 2 N$ in the exponential case, whereas
\begin{equation}
\label{exponent}
p_F=\begin{cases}
\frac \alpha N -1  &\text{ if } N<\alpha\leq N+2 \\
\frac 2 N  &\text{ if } \alpha>N+2,\end{cases}
\end{equation}
in the algebraic case \eqref{tails}. In other words, in the algebraic case $\alpha>N+2$ the Fujita exponent is of the Heat type \eqref{eq-fujita} (and so in the exponential case), but in the algebraic case $N<\alpha \leq N+2$ the Fujita exponent becomes of the Fractional type \eqref{eq-sugitani} with $s=\alpha -N \in (0,2]$.  This is the role of the present paper to prove, among others, these results.

Let us comment on some technical difficulties arising from \eqref{eq}. Notice first that, as far as \eqref{eq-fujita} and \eqref{eq-sugitani} are concerned, some self similarity properties of both the Heat kernel and the  fundamental solution associated to the Fractional Laplacian may be quite helpful, as seen in \cite{Sug-75} or \cite{Wei-81}. Those self similarity properties are not shared by  the fundamental solution of $\partial _t u=J*u-u$. Secondly, notice that, when $J$ is compactly supported, the underlying nonlocal eigenvalue problem to \eqref{eq} is rather well understood \cite{Gar-Ros-09} and the authors in \cite{Gar-Qui-10} took advantage of  its rescaling properties. As far as we know, such informations are not available for more general dispersal kernels, as those we consider. We therefore have to adapt some technics, in particular when dealing with blow up phenomena. 

\medskip
We now discuss the hair trigger effect in some population dynamics models. Let us start with the Fisher-KPP equation 
$$
\partial _t u=\Delta u+u(1-u),
$$
which was introduced \cite{Fis-37}, \cite{Kol-Pet-Pis-37}, to model the spreading of advantageous genetic
features in a population. From the linear instability of the steady state $u\equiv 0$, it is well known that any solution $u(t,x)$ to the Fisher-KPP equation,
with a nonnegative and nontrivial initial data, tend to $1$ as $t\to \infty$, locally uniformly in $x\in\R ^ N$. This is referred to as the hair trigger effect.

In order to take into account a weak Allee effect, meaning that the growth per capita is no longer maximal at small densities, one may consider
\begin{equation}
\label{eq-aronson}
\partial _t u=\Delta u+u^{1+p}(1-u),
\end{equation}
where $p>0$. Then the hair trigger effect for \eqref{eq-aronson} is naturally linked with the Fujita blow up phenomena for \eqref{eq-fujita}. Hence, in their seminal work, Aronson and Weinberger \cite{Aro-Wei-78} showed  that the hair trigger effect remains valid for \eqref{eq-aronson} as long as $0<p\leq p_F=\frac 2 N$, whereas some (small enough) initial data may lead to extinction, or quenching, when $p>p_F=\frac 2 N$. See also \cite{Xin-93}, \cite{Beb-Li-Li-97}, \cite{Zla-05}.

Based on our Fujita type results for \eqref{eq}, we shall discuss the hair trigger effect for \eqref{eq2}, thus making more precise the balance between the effect of the dispersal tails and the strength of the Allee effect which allows or not  the hair trigger effect. Let us mention that, rather recently, various new results studying the interplay between some heavy tails and an Allee effect have been proved. Let us mention \cite{Zha-Li-Wan-12}, \cite{Mellet}, \cite{Gui-Hua-15}, \cite{Alf-tails-preprint}, \cite{Alf-Cov-preprint}, and the references therein. In those works, the issue is, in different situations permitting propagation, to determine whether invasion is performed at a constant speed or by accelerating.

\section{Assumptions and results}\label{s:results}

Let us first present and discuss the assumptions on the dispersal kernel $J$.  As observed and proved in \cite{Cha-Cha-Ros-06}, expansion \eqref{J-Fourier-ass} plays a crucial role in the behavior of the linear equation $\partial _t u=J*u-u$, and so will for the nonlinear problem \eqref{eq}.  

\begin{assumption}[Dispersal kernel]
\label{ass:J} $J:\R^N\to \R$ is  nonnegative, bounded, radial and satisfies $\int _{\R^{N}}J=1$. Its Fourier transform has an expansion
\begin{equation}
\label{J-Fourier-ass}
\J(\xi)=1-A\vert \xi\vert ^{\beta}+o(\vert \xi \vert ^{\beta}), \quad \text{ as } \xi \to 0,
\end{equation}
for some $0<\beta\leq 2$ and $A>0$.
\end{assumption}

%Another interesting remark which may help is the folllowing \cite{Cha-Cha-Ros-06}: \eqref{tails} yields 
%\begin{equation}
%\label{J-Fourier}
%\J(\xi)=1-A\vert \xi\vert ^{\alpha -N}+o(\vert \xi \vert ^{\alpha -N}), \quad \text{ as } \xi \to 0.
%\end{equation}
%When $N<\alpha\leq N+2$ then \cite[Theorem 1]{Cha-Cha-Ros-06} states that $\partial _t u=J*u-u$ \lq\lq converges'' to $\partial _t v=-A(-\Delta)^{\frac{\alpha-N}{2}}v$ and $u$ decreases to zero like $\frac{1}{t^{\frac{N}{\alpha -N}}}$ from which we can guess the $p_F=\frac \alpha N -1$... 

Notice that expansion \eqref{J-Fourier-ass} contains the information on the tails of $J$. Indeed, for kernels which have a finite second momentum, namely $m_2:=\int_{\R ^N}\vert x\vert ^2J(x)dx<+\infty$,  expansion \eqref{J-Fourier-ass} holds true with $\beta =2$, as can be seen in \cite[Chapter 2, subsection 2.3.c, (3.8) Theorem]{Dur-96} among others. In particular, this is the case for kernels which are compactly supported, exponentially bounded, or which decrease like  $\mathcal O\left(\frac{1}{\vert x\vert^{N+2+\ep}}\right)$ with $\ep>0$.

On the other hand, when $m_2=+\infty$ then more general expansions are possible. For example, for algebraic tails satisfying 
\begin{equation}
\label{heavytails}
J(x)\sim \frac{C}{\vert x\vert ^{\alpha}} \quad \text{ as }\vert x\vert \to \infty, \quad\text{ with } N<\alpha< N+2,
\end{equation}
then \eqref{J-Fourier-ass} holds true with $\beta= \alpha- N\in(0,2)$. This fact is related to the stable laws of index $\beta\in(0,2)$ in probability theory, and a proof can be found in \cite[Chapter 2, subsection 2.7]{Dur-96}. In particular it contains the case of the Cauchy law $J(x)=\frac{1/\pi}{1+x^2}$ (when $N=1$), for which
$$
 \widehat J(\xi)=e^{-\vert\xi\vert}=1-\vert\xi\vert +o(\vert \xi\vert), \quad  \text{ as } \xi \to 0,
$$
and $\beta =1$, despite the nonexistence of the first momentum $m_1:=\int \vert x\vert J(x)dx$.

\begin{rem} [Critical algebraic tails] For algebraic tails
$$
J(x)\sim \frac{C}{\vert x\vert ^{N+2}}, \quad \text{ as }\vert x\vert \to \infty,
$$
which are critical for the nonexistence of the second momentum $m_2$, expansion \eqref{J-Fourier-ass} is replaced by
$$
\J(\xi)=1+A\vert \xi\vert ^2 \ln \vert \xi\vert +o(\vert \xi\vert ^2 \ln \vert \xi\vert), \quad \text{ as } \xi \to 0.
$$
Nevertheless, as proved in \cite[Theorem 5.1]{Cha-Cha-Ros-06}, the solutions to $\partial _t u=J*u-u$ still tend to that of the Heat equation, but with a different time velocity. In other words, for such tails, we do believe that $p_F=\frac 2 N$ and that this can be proved by additonal technicalities and by using \cite[Theorem 5.1]{Cha-Cha-Ros-06} rather than \cite[Theorem 1]{Cha-Cha-Ros-06} to derive an anologous of Lemma \ref{lem:decrease}.
\end{rem}

\medskip

Before going further, we need to say a word on the notion of solutions. A function $u\in C^1((0,T),L^\infty(\R ^N)\cap L^1(\R ^N))\cup C^0([0,T),L^\infty(\R ^N)\cap L^1(\R ^N))$ for some $T > 0$, which satisfies the equation a.e. in $(0,T)\times\R^{N}$ is a local solution to \eqref{eq} with $u(0,\cdot)$ as initial data. For such solutions, the comparison principle is available. Furthemore, for a nonnegative $u_0\in L^\infty(\R^N)\cap L^1(\R ^N)$, the associated Cauchy problem \eqref{eq} admits a unique solution defined on some maximal interval $[0,T)$. Moreover either $T=\infty$ and the solution is global, either $T<\infty$ and then both  $\Vert u(t,\cdot)\Vert _{L^1}$ and $\Vert u(t,\cdot)\Vert _{L^\infty}$ tend to $\infty$ as $t\nearrow T$, which is called blow up in finite time. These facts are rather well-known, and parts of them can be found in \cite{Gar-Qui-10} for instance. 

\medskip

As explained in the introduction, our main result is the identification of the Fujita exponent for the nonlocal equation \eqref{eq} for a large class of dispersion kernels, namely those admitting an expansion \eqref{J-Fourier-ass}. In this context, $p_F:=\frac{\beta}{N}$ is the Fujita exponent. More precisely, the following holds.

\begin{theo}[Systematic blow up]
\label{th:systematic}
Let Assumption \ref{ass:J} hold. Assume $0<p\leq p_F=\frac{\beta}{N}$. Assume $u_0\in L^\infty(\R^N)\cap L^1(\R ^N)$ is nonnegative and satisfies --- for some $\ep>0$, $x_0\in \R ^N$, $r>0$--- $u_0(x)\geq \ep$ for all $x\in B(x_0,r)$. Then  the solution to the Cauchy problem \eqref{eq} with $u_0$ as initial data blows up in finite time.
\end{theo}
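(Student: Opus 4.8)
The plan is to use the classical integration/test-function method for Fujita-type blow up, adapted to the nonlocal operator. First I would argue by contradiction: assume the solution $u$ is global in time. Working with the Fourier-side (or, equivalently, with the kernel $\psi(t,x)$ of $\partial_tu=J*u-u$), I would introduce the quantity $I(t):=\int_{\R^N}u(t,x)\,\psi(t+1,x)\,dx$ (or a mass-type functional $\int u(t,x)\,dx$ against a slowly varying weight), differentiate in $t$, and use the equation together with the self-adjointness of $J*\cdot-\cdot$ to kill the linear part: $\frac{d}{dt}\int u\,\psi(t+1,\cdot)=\int u^{1+p}\psi(t+1,\cdot)$, since $\partial_t\psi=J*\psi-\psi$ exactly cancels the diffusion term after integrating by parts. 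The sign-definiteness of the nonlinearity then gives a differential inequality for $I(t)$ once the right-hand side is bounded below via Jensen's inequality: $\int u^{1+p}\psi\ge (\int\psi)^{-p}\big(\int u\psi\big)^{1+p}$ against the probability-like weight $\psi(t+1,\cdot)/\int\psi$.

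Second, and this is where Assumption \ref{ass:J} enters, I would invoke the decay estimate for the linear semigroup — the analogue of what the paper calls Lemma \ref{lem:decrease}, itself built on \cite{Cha-Cha-Ros-06} — namely $\Vert\psi(t,\cdot)\Vert_{L^\infty}=\mathcal O(t^{-N/\beta})$ as $t\to\infty$, while $\int\psi(t,\cdot)=1$ for all $t$. Combined with the lower bound $u(t_0,\cdot)\ge\eta$ on some ball coming from the hypothesis on $u_0$ (using the strong positivity/comparison principle to push $u_0\ge\ep\mathbf 1_{B(x_0,r)}$ forward a short time), this yields $I(t)=\int u\,\psi(t+1,\cdot)\ge c\,(t+1)^{-N/\beta}$ for a suitable constant $c>0$; equivalently $\int u^{1+p}\psi \ge c'\,(t+1)^{-(1+p)N/\beta+?}$ after feeding the pointwise bound on $\psi$ into Jensen. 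One then closes the argument: $I'(t)\ge (t+1)^{-pN/\beta-\text{(correction)}}I(t)^{1+p}$ — more cleanly, writing $J(t):=\int u(t,x)\,dx$ and using $\frac{d}{dt}J=\int u^{1+p}\ge \Vert\psi(t+1,\cdot)\Vert_\infty^{-p}\big(\int u\,\psi(t+1,\cdot)\big)^{1+p}\ge c\,(t+1)^{pN/\beta}\,I(t)^{1+p}$, together with the already established lower bound $I(t)\ge c(t+1)^{-N/\beta}$ — produces $J'(t)\ge \tilde c\,(t+1)^{pN/\beta}\,(t+1)^{-(1+p)N/\beta}=\tilde c\,(t+1)^{-N/\beta}$. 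When $p\le p_F=\beta/N$ one has $N/\beta\le 1/p$, so this lower bound for $J'$ is not integrable at infinity (the borderline $p=p_F$ giving exactly $(t+1)^{-1}$, hence logarithmic divergence), forcing $\Vert u(t,\cdot)\Vert_{L^1}\to\infty$ in finite time — contradiction.

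For the critical case $p=p_F$ the crude estimate only gives $J(t)\to\infty$, not finite-time blow up, so there I would instead iterate: bootstrap the lower bound on $I(t)$ using the Duhamel representation $u(t)=\psi(t-t_0)*u(t_0)+\int_{t_0}^t\psi(t-s)*u^{1+p}(s)\,ds$ to improve $I(t)\gtrsim (t+1)^{-N/\beta}$ to $I(t)\gtrsim (t+1)^{-N/\beta}\log(t+1)$ and then to successively larger powers of $\log$, in the spirit of Weissler \cite{Wei-81} and Sugitani \cite{Sug-75}; after finitely many steps the resulting differential inequality for $J$ does blow up in finite time.

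The main obstacle I anticipate is precisely this critical case: without the exact self-similarity enjoyed by the Heat or fractional kernels, the bootstrap must be run using only the two-sided bounds $\int\psi=1$ and $\Vert\psi(t,\cdot)\Vert_\infty\asymp t^{-N/\beta}$ (plus perhaps a matching lower bound $\psi(t,x)\gtrsim t^{-N/\beta}$ for $|x|\lesssim t^{1/\beta}$, which should follow from \cite{Cha-Cha-Ros-06}), and one has to be careful that the convolution estimates used in the iteration step do not lose the logarithmic gain. A secondary technical point is transferring the ball lower bound on $u_0$ to a clean lower bound on $I(t_0)$ uniformly in the parameters $\ep,x_0,r$; this is routine via the comparison principle and translation invariance of the equation, so it should not cause real trouble.
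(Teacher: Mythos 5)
Your overall strategy --- a Kaplan-type functional built from the kernel $\psi$ of $\partial_t u=J*u-u$, the decay $\Vert\psi(t,\cdot)\Vert_{L^\infty}=\mathcal O(t^{-N/\beta})$, and a Sugitani-type bootstrap at criticality --- is in the right family, but two steps fail as written. First, the claimed cancellation $\frac{d}{dt}\int u(t,\cdot)\psi(t+1,\cdot)=\int u^{1+p}(t,\cdot)\psi(t+1,\cdot)$ is false: with a weight evolving \emph{forward} in time the linear contributions add rather than cancel; by \eqref{eq:psi} and the evenness of $J$ one gets $\frac{d}{dt}\int u\,\psi(t+1,\cdot)=2\int u\,(J*\psi-\psi)(t+1,\cdot)+e^{-(t+1)}\int J\,u+\int u^{1+p}\psi(t+1,\cdot)$, and $J*\psi-\psi$ has no sign. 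To kill the linear part the weight must solve the backward equation, which is exactly the paper's choice $g_\ep(s)=\int\psi(t-s+\ep,\cdot)u(s,\cdot)$ in the proof of Lemma \ref{lem:f-above}. Second, and more seriously, your subcritical conclusion is a non sequitur: from $\frac{d}{dt}\Vert u(t,\cdot)\Vert_{L^1}\geq \tilde c\,(1+t)^{-N/\beta}$ (or $(1+t)^{-pN/\beta}$; incidentally the Jensen/H\"older constant is $\Vert\psi\Vert_{L^\infty}^{-1}$, not $\Vert\psi\Vert_{L^\infty}^{-p}$), non-integrability at infinity only yields $\Vert u(t,\cdot)\Vert_{L^1}\to\infty$ as $t\to\infty$, not in finite time, and unbounded mass as $t\to\infty$ does not contradict global existence (blow up means divergence as $t\nearrow T<\infty$). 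What is missing is precisely the \emph{upper} bound that the backward-weight computation produces, namely $\int K(t,x)u_0(x)\,dx\lesssim t^{-1/p}$ (Lemma \ref{lem:f-above}); confronted with the lower bound $\gtrsim \Vert u_0\Vert_{L^1}\,t^{-N/\beta}$ (Lemma \ref{lem:f-below}, which the paper obtains on the Fourier side, so that the pointwise lower bound on $\psi$ you invoke from \cite{Cha-Cha-Ros-06} is not even needed there), this clash of powers settles the whole range $p<\beta/N$. You never derive any $t^{-1/p}$-type upper bound, so your argument does not close even in the subcritical case.

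For the critical case $p=\beta/N$ your proposal is only a plan: the logarithmic bootstrap \`a la Weissler/Sugitani requires convolution and on-diagonal lower bounds for $\psi$ at the diffusive scale without any self-similarity, and you do not explain how ``finitely many steps'' convert the improved lower bounds on $I(t)$ into finite-time blow up --- in the classical arguments the iteration is played off against a Kaplan-type upper bound, i.e. again the estimate you are missing. Note that the paper proceeds quite differently at criticality: since the powers in Lemmas \ref{lem:f-below} and \ref{lem:f-above} match, comparing them gives $\Vert u(t,\cdot)\Vert_{L^1}\leq C$ with $C$ independent of the data, hence $\int_0^\infty\int_{\R^N}u^{1+p}<+\infty$; a rescaled test-function argument in the spirit of Mitidieri--Pokhozhaev, whose key point is the estimate of $J*\theta_R-\theta_R$ via Plancherel and the expansion \eqref{J-Fourier-ass} (unavoidable here because $J$ may have no finite first or second moment), then forces $u\equiv 0$, a contradiction. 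Unless you actually carry out your bootstrap, the critical case remains open in your write-up, and even granting it, the subcritical gap above must be repaired first.
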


\begin{theo}[Blow up vs extinction]\label{th:vs} Let Assumption \ref{ass:J} hold. Assume $p>p_F=\frac{\beta}{N}$. Then the following holds.
\begin{enumerate}[(i)]
\item There is $\delta>0$ such that, for any nonnegative  $u_0\in L^\infty(\R^N)\cap L^1(\R ^N)$ with
$$
\Vert u_0\Vert_{L ^{1}} +\Vert \uzero\Vert _{L ^{1}}<\delta,
$$
the solution to the Cauchy problem \eqref{eq} is global in time and  satisfies, for some $C>0$,
\begin{equation}
\label{extinction}
\Vert u(t,\cdot)\Vert _{L^\infty}\leq \frac{C}{(1+t)^{N/\beta}}, \quad \text{ for any } t\geq 0.
\end{equation}

\item On the other hand, assume $\lambda >0$ and $R>0$ are such that
\begin{equation}
\label{lambda-R}
\lambda > \left(1-C_N \int_{\vert z\vert \leq R} J(z)\, dz\right)^{1/p},
\end{equation}
where $0<C_N<1$ is a constant that depends only on the dimension $N$ (see subsection \ref{ss:large} for the exact value of this constant). Then, the solution to the Cauchy problem  \eqref{eq} with $\lambda \mathbf 1_ {\{\vert x\vert \leq R\}}$ as initial data blows up in finite time.
\end{enumerate}
\end{theo}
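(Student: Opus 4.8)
\medskip

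For part (i), the plan is to set up a comparison with a supersolution built from the fundamental solution of the linear nonlocal equation $\partial_t u = J*u - u$. Let $\psi(t,x)$ denote this fundamental solution (i.e.\ the solution with Dirac initial data). By Assumption \ref{ass:J} and the arguments of Chasseigne, Chaves and Rossi \cite{Cha-Cha-Ros-06}, one has the decay estimate $\|\psi(t,\cdot)\|_{L^\infty}\leq C(1+t)^{-N/\beta}$, together with $\|\psi(t,\cdot)\|_{L^1}=1$ (conservation of mass). I would seek a supersolution of the form $\overline{u}(t,x) = a(t)\, (\psi * u_0)(t,x)$ or more simply $\overline u = a(t)\,\varphi(t,x)$ where $\varphi$ solves the linear equation with a suitably chosen initial datum dominating $u_0$, and $a(t)$ is an increasing but bounded amplitude factor chosen to absorb the nonlinear term $u^{1+p}$. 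Plugging $\overline u$ into the equation, the condition to be a supersolution reduces to an ODE inequality for $a(t)$ of the form $a'(t)\,\varphi \geq a(t)^{1+p}\varphi^{1+p}$, i.e.\ $a'(t) \geq a(t)^{1+p}\|\varphi(t,\cdot)\|_{L^\infty}^{p}$; using $\|\varphi(t,\cdot)\|_{L^\infty}^p \leq C(1+t)^{-Np/\beta}$ and the hypothesis $p > p_F = \beta/N$ (so that $Np/\beta > 1$ and $\int_0^\infty (1+t)^{-Np/\beta}\,dt < \infty$), one can solve $a'(t) = C a(t)^{1+p}(1+t)^{-Np/\beta}$ and check that $a(t)$ stays bounded, say by $2a(0)$, provided $a(0)$ is small enough. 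Smallness of $a(0)$ is guaranteed by the smallness of $\|u_0\|_{L^1} + \|\uzero\|_{L^1}$, which controls both $\|\varphi(0,\cdot)\|_{L^1}$ and $\|\varphi(0,\cdot)\|_{L^\infty}$ (the latter via $\|\widehat{u_0}\|_{L^1}$ and Fourier inversion). The comparison principle then yields global existence and, since $\overline u(t,x)\leq 2a(0)\|\varphi(t,\cdot)\|_{L^\infty}\leq C(1+t)^{-N/\beta}$, the bound \eqref{extinction}.

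\medskip

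For part (ii), the idea is the classical blow up argument via a differential inequality for a weighted spatial average of $u$, adapted to the nonlocal setting. Fix the eigenfunction-type weight: for $\lambda\mathbf 1_{\{|x|\leq R\}}$ as initial datum, I would compare $u$ from below with a function that is, roughly, spatially constant on $\{|x|\leq R\}$ and couple this with the growth coming from $u^{1+p}$. More precisely, one shows first, using the comparison principle against the spatially homogeneous ODE $v' = -C_N' v + v^{1+p}$ combined with a lower bound on $J*u$ of the form $(J*u)(t,x)\geq \big(\int_{|z|\leq R}J(z)\,dz\big)\inf_{|y|\leq R}u(t,y)$ for $|x|\leq R$, that the infimum $M(t):=\inf_{|x|\leq R}u(t,x)$ satisfies a differential inequality $M'(t)\geq -\big(1 - C_N\int_{|z|\leq R}J(z)\,dz\big)M(t) + M(t)^{1+p}$ on a time interval where the solution remains ordered; the constant $C_N\in(0,1)$ (whose value is pinned down in subsection \ref{ss:large}) arises from how much mass of $J$ near a point $|x|=R$ can ``leak out'' of the ball while still seeing values of $u$ inside. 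Since $M(0)=\lambda$ and hypothesis \eqref{lambda-R} says exactly that $\lambda^{p} > 1 - C_N\int_{|z|\leq R}J(z)\,dz$, the right-hand side $-\kappa M + M^{1+p}$ with $\kappa := 1 - C_N\int_{|z|\leq R}J(z)\,dz$ is strictly positive at $t=0$, so $M(t)$ is increasing and the differential inequality forces $M(t)\to\infty$ in finite time; since $\|u(t,\cdot)\|_{L^\infty}\geq M(t)$, this is blow up in finite time.

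\medskip

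The main obstacle, in my view, is part (ii): making rigorous the lower bound on $J*u$ inside the ball and, crucially, justifying that the ordering used to close the differential inequality for $M(t)$ is preserved in time despite the nonlocal smoothing, which — unlike the Laplacian case where one uses the first Dirichlet eigenfunction on the ball — has no clean rescaling or eigenvalue structure for general $J$ (this is precisely the difficulty flagged in the introduction). The correct choice of the constant $C_N$ and the correct cutoff geometry (perhaps working on a slightly smaller ball, or with a carefully chosen subsolution that is not exactly an indicator) will require care; I expect the paper handles this in subsection \ref{ss:large}. For part (i), the only delicate point is transferring the $L^\infty$ decay of $\psi$ from \cite{Cha-Cha-Ros-06} and checking the ODE for $a(t)$ stays bounded, which is routine once $p > \beta/N$ is used.
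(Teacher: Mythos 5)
Your part (i) is essentially the paper's proof: a supersolution of the form $g(t)v(t,x)$ with $v$ solving the linear equation, the decay estimate of \cite{Cha-Cha-Ros-06} (stated in the paper as Lemma \ref{lem:decrease}, with the constant depending on $\Vert u_0\Vert_{L^1}+\Vert \uzero\Vert_{L^1}$), and the observation that the ODE $g'/g^{1+p}=C^p(\Vert u_0\Vert_{L^1}+\Vert\uzero\Vert_{L^1})^p(1+t)^{-pN/\beta}$ has a globally bounded solution precisely because $pN/\beta>1$ and the data are small; your packaging (small $a(0)$ rather than small coefficient) is an immaterial variant. For part (ii) you take a genuinely different route: you track $M(t)=\inf_{|x|\leq R}u(t,x)$ and compare with the spatially homogeneous ODE $v'=-\kappa v+v^{1+p}$, $\kappa=1-C_N\int_{|z|\leq R}J$, whereas the paper works with the localized mass $m(t)=\int_{|x|\leq R}u$, integrates the equation over the ball, bounds the reaction term from below by Jensen's inequality ($\int_{|x|\leq R}u^{1+p}\geq m^{1+p}/(B_N^pR^{Np})$) and the transport term by Fubini plus the geometric claim \eqref{claim-boule}, arriving at the Bernoulli inequality $m'\geq m\bigl[m^p/(B_N^pR^{Np})-\kappa\bigr]$ with the same threshold \eqref{lambda-R}. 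Both arguments hinge on exactly the same geometric fact, namely $\int_{|z-y|\leq R}J(z)\,dz\geq C_N\int_{|z|\leq R}J(z)\,dz$ for $|y|<R$, which the paper proves by an explicit spherical-sector computation yielding the value of $C_N$; your displayed lower bound for $J*u$ omits $C_N$ (it fails for $x$ near $\partial B_R$), but you reinstate it in the differential inequality and correctly identify the boundary leakage as its source, so this is a slip rather than a gap. The mass formulation buys a cleanly differentiable quantity and avoids the regularity/ordering issues you flag for $M(t)$; your infimum route can be made rigorous by comparing $u$ with the discontinuous subsolution $v(t)\mathbf 1_{B_R}$ (zero outside the ball), which the nonlocal comparison principle tolerates, so it is a viable alternative provided you supply the proof of the geometric claim rather than only deferring it.
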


As regards condition \eqref{lambda-R}, let us notice that if $\lambda >1$ then it is satisfied for any $R>0$, indicating that large $L^\infty$ data always lead to blow up; if $(1-C_N)^{1/p}<\lambda\leq 1$ then \eqref{lambda-R} is satisfied by taking $R>0$ sufficienty large, indicating that intermediate  $L^\infty$ data require large initial mass to blow up (at least in our result); if $\lambda\leq (1-C_N)^{1/p}$ then \eqref{lambda-R} is never satisfied, indicating that small $L^\infty$ data are bad candidates for blowing up.

Let us recall that, when $J$ is compactly supported, the fact that the Fujita exponent $p_F=\frac 2 N$ is the same as that of the nonlinear diffusion equation \eqref{eq-fujita} was already proved in \cite{Gar-Qui-10}. Nevertheless, our results assert further that this remains true for kernels $J$ which have a finite second momentum. On the other hand, when $\beta <2$ in expansion \eqref{J-Fourier-ass} the Fujita exponent becomes that of the Fractional equation \eqref{eq-fujita} with $s=\beta \in(0,2)$. Hence, depending on the tails of the dispersal kernel, the nonlocal equation \eqref{eq}  behaves with respect to blow up either like the local Heat equation \eqref{eq-fujita}, either like a Fractional equation \eqref{eq-sugitani}. This sheds light on the richness of \eqref{eq}.

%Notice that, by a comparison argument, the conclusion of Theorem \ref{th:vs} $(i)$ also hold true for any  initial data $\ep\mathbf 1 _{\{\vert x\vert \leq r\}}$ with $\ep>0$ and $r>0$ small enough.

\medskip

We now turn to the hair trigger effect for \eqref{eq2}, whose analysis makes use of the Fujita type results for \eqref{eq}. Notice that if $0\leq u_0\leq \Vert u_0\Vert _\infty<+\infty$ then,  from the comparison principle, we get that the solution to \eqref{eq2} satisfies
$$
0<u(t,x)\leq \max(1,\Vert u_0\Vert _\infty), \quad \forall (t,x) \in(0,\infty)\times \R ^N,
$$
so that the solution is always global.

\begin{cor}[Hair trigger effect along a subsequence vs. quenching solutions]
\label{cor:hair}
Let Assumption \ref{ass:J} hold.
\begin{enumerate}[(i)]
\item Assume $0<p\leq p_F=\frac{\beta}{N}$. Assume $u_0:\R^N\to [0,1]$ is continuous and non trivial. Then  the solution to the Cauchy problem \eqref{eq2} with $u_0$ as initial data satisfies
\begin{equation}
\label{hair}
\limsup _{t\to \infty} \inf _{\vert x\vert \leq R} u(t,x)=1, \quad \text{ for any } R\geq 0.  
\end{equation}
\item Assume $p>p_F=\frac{\beta}{N}$. Then there is $\delta>0$ such that, for any nonnegative, continuous, bounded and integrable $u_0$ with
$
\Vert u_0\Vert_{L ^{1}} +\Vert \uzero\Vert _{L ^{1}}<\delta$, 
the solution to the Cauchy problem \eqref{eq2} satisfies, for some $C>0$,
\begin{equation}
\label{extinction2}
\Vert u(t,\cdot)\Vert _{L^\infty}\leq \frac{C}{(1+t)^{N/\beta}}, \quad \text{ for any } t\geq 0.
\end{equation}
\end{enumerate}
\end{cor}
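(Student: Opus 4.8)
We treat the two items separately.

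\textbf{Item (ii).} Since $v^{1+p}(1-v)\le v^{1+p}$ for every $v\ge0$, any solution $u$ of \eqref{eq2} is a subsolution of \eqref{eq} with the same datum. Let $\delta>0$ be as in Theorem \ref{th:vs}(i) and let $u_0$ satisfy $\Vert u_0\Vert_{L^1}+\Vert\uzero\Vert_{L^1}<\delta$; if $\bar u$ denotes the solution of \eqref{eq} with datum $u_0$, then $0\le u\le\bar u$ by the comparison principle, and $\bar u$ is global and satisfies \eqref{extinction} by Theorem \ref{th:vs}(i). Hence $u$ is global and satisfies \eqref{extinction2}.

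\textbf{Item (i), reductions.} Comparing with the steady states $0$ and $1$ gives $0\le u\le1$, so $\limsup_{t\to\infty}\inf_{|x|\le R}u(t,x)\le1$ and only the reverse inequality has to be proved. As $u_0$ is continuous and non trivial, $u_0\ge\epsilon_0\mathbf 1_{B(x_*,r_0)}$ for some $\epsilon_0,r_0>0$ and $x_*\in\R^N$; by comparison and translation invariance of \eqref{eq2} it suffices to establish \eqref{hair} (for every $R>0$) when $u_0=\epsilon_0\mathbf 1_{B(0,r_0)}$, which we assume from now on. Then $u(t,\cdot)$ is radial for all $t$; and, since the reaction in \eqref{eq2} is nonnegative on $[0,1]$, $u$ dominates the solution of $\partial_t z=J*z-z$ with the same datum, whence $u(t,\cdot)\ge e^{-t}u_0$ for all $t\ge0$.

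\textbf{Item (i), the supremum reaches $1$.} I claim $\limsup_{t\to\infty}\Vert u(t,\cdot)\Vert_{L^\infty}=1$. Otherwise $u\le\theta$ on $[t_0,\infty)\times\R^N$ for some $\theta\in(0,1)$ and $t_0\ge0$; then $u^{1+p}(1-u)\ge(1-\theta)u^{1+p}$ there, so $u$ is, on $[t_0,\infty)$, a supersolution of $\partial_t w=J*w-w+(1-\theta)w^{1+p}$. Let $w$ solve this equation with $w(t_0,\cdot)=e^{-t_0}u_0\le u(t_0,\cdot)$, so that $0\le w\le u\le\theta$. But $(1-\theta)^{1/p}w$ solves \eqref{eq} with datum $(1-\theta)^{1/p}e^{-t_0}\epsilon_0\mathbf 1_{B(0,r_0)}$, which is bounded below by a positive constant on $B(0,r_0)$; since $0<p\le p_F=\frac{\beta}{N}$, Theorem \ref{th:systematic} forces it, hence $w$, to blow up in finite time, contradicting $w\le\theta$. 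This proves the claim, and we fix $\tau_n\to\infty$ and $x_n\in\R^N$ with $u(\tau_n,x_n)\ge1-\tfrac1n$.

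\textbf{Item (i), upgrading to the inner infimum, and the main obstacle.} The remaining task is to propagate ``$\Vert u\Vert_{L^\infty}$ close to $1$'' into ``$u$ close to $1$ on $B(0,R)$''. A useful rigidity fact is: any bounded entire solution $v$ of \eqref{eq2} with $0\le v\le1$ equal to $1$ at some point is $\equiv1$. Indeed, such a point $(t^*,x^*)$ is then a global maximum, so $\partial_t v(t^*,x^*)=0$; from the equation, $(J*v)(t^*,x^*)=v(t^*,x^*)=1$, and since $\int J=1$ and $v\le1$ this forces $v(t^*,\cdot)=1$ on $x^*-\mathrm{supp}\,J$; iterating, $v(t^*,\cdot)\equiv1$ on $\R^N$ (the sub-semigroup generated by $\mathrm{supp}\,J$ being all of $\R^N$), and $v\equiv1$ by uniqueness. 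By standard parabolic compactness for \eqref{eq2} (bounded solutions are $C^1$ in $t$ and equicontinuous in $x$ because $\int J=1$), a subsequence of $(t,x)\mapsto u(\tau_n+t,x_n+x)$ converges locally uniformly to such an entire $v$ with $v(0,0)=1$; hence $v\equiv1$ and, for each $\rho>0$, $u(\tau_n,\cdot)\ge\tfrac34$ on $B(x_n,\rho)$ for all large $n$. One then wants this large plateau of height $\tfrac34$, driven by \eqref{eq2} (for which $u(\tau_n+\cdot,\cdot)$ is a supersolution once compared below $1$), to rise to level $\theta$ on a ball containing $B(0,R)$ at a later time $\tau_n+s_n\to\infty$, which gives \eqref{hair}. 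This spreading step is the crux, and involves two difficulties: (a) the reaction $\sim(1-\theta)w^{1+p}$ is \emph{degenerate} at $w=0$, so a subsolution supported in a single ball collapses (its principal eigenvalue is negative) and one must keep the plateau genuinely flat on a large interior region, exploiting the smallness of $\int_{|y|\ge\rho/2}J$; and (b), more seriously, the radius $\rho$ needed for the plateau to overtake $B(0,R)$ is tied to $|x_n|$, while the $n$ needed for $u(\tau_n,\cdot)\ge\tfrac34$ on $B(x_n,\rho)$ depends on $\rho$, so closing the loop requires an a priori bound on the location of the set where $u$ is close to $1$, i.e. a spreading estimate for \eqref{eq2}. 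This is transparent when $J$ is radially non-increasing, for then so is $u(t,\cdot)$, $\Vert u(t,\cdot)\Vert_{L^\infty}=u(t,0)$ and one may take $x_n\equiv0$; for a general radial kernel under Assumption \ref{ass:J} it is the delicate point, and the reason why only a $\limsup$ along a subsequence can be reached.
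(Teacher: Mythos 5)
Your item (ii) coincides with the paper's one-line argument (comparison with Theorem \ref{th:vs} $(i)$, since $u^{1+p}(1-u)\le u^{1+p}$), and your first two steps of item (i) — reduction to a small indicator datum and the proof that $\limsup_{t\to\infty}\Vert u(t,\cdot)\Vert_{L^\infty}=1$ by turning a uniform bound $u\le\theta<1$ into a supersolution of \eqref{eq} and invoking Theorem \ref{th:systematic} — are correct and use the same blow-up comparison trick as the paper. But the actual content of \eqref{hair} is the \emph{infimum over $B_R$}, and that is precisely the step you do not prove: your ``upgrading'' paragraph produces plateaus near unlocalized points $x_n$ and then openly concedes that the spreading step cannot be closed without an a priori bound on $|x_n|$, offering it only when $J$ is radially non-increasing. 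This is a genuine gap, not a technical remark: the paper proves \eqref{hair} in full under Assumption \ref{ass:J}, and the phrase ``along a subsequence'' in the statement refers only to taking a $\limsup$ in time, not to a weakened spatial conclusion, so your closing diagnosis misreads what remains to be shown.

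The paper avoids your obstruction entirely by choosing the comparison data to carry the monotone structure from the start (following Quittner--Souplet, Theorem 18.7). One first treats data $v_0\in C(\R^N,[0,1])$ with $v_0(x_0+\cdot)$ radial nonincreasing about an \emph{arbitrary} point $x_0$: since $J$ is radial this structure persists for $t>0$, so the contradiction hypothesis $v(t,x_0)\le 1-\ep$ for $t\ge T$ immediately upgrades to $v\le 1-\ep$ on all of $\R^N$ (the value at the center is the sup), whence $w:=\ep^{1/p}v$ is a supersolution of \eqref{eq} and Theorem \ref{th:systematic} gives the contradiction; this yields $\limsup_{t\to\infty}v(t,x_0)=1$ \emph{at the prescribed point} $x_0$, which is exactly the localization your argument lacks. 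For a general nontrivial $u_0$ one then minorizes (after a time shift making $u>0$) by a radial nonincreasing $\tilde u_0$ centered at the origin, applies the above at every $x_0$, and uses that $\tilde u(t,\cdot)$ stays radial nonincreasing so that $\inf_{|x|\le R}\tilde u(t,x)=\tilde u(t,x_R)$ with $|x_R|=R$; no spreading estimate, no compactness/rigidity step, and no control of maximum points is ever needed. If you want to repair your proof along your own lines you would have to supply precisely such a localization; the simplest fix is to adopt the paper's device of comparing from below with translates of radially nonincreasing data rather than working with $\Vert u(t,\cdot)\Vert_{L^\infty}$.
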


Observe that Corollary \ref{cor:hair} $(ii)$ directly follows from Theorem \ref{th:vs} $(i)$ and the comparison principle.  Corollary $(i)$, whose proof is rather classical,  is the hair trigger effect, but only along a subsequence of time. Under a more restrictive assumption on the exponent  $p$, we can actually prove the following hair trigger effect.

\begin{theo}[Hair trigger effect]
\label{th:hair}
Let Assumption \ref{ass:J} hold. Assume $0<p<\frac 12  p_F=\frac 12 \frac{\beta}{N}$. Assume $u_0:\R^N\to [0,1]$ is continuous and non trivial. Then  the solution to the Cauchy problem \eqref{eq2} with $u_0$ as initial data satisfies
\begin{equation}
\label{hair2}
\lim _{t\to \infty} \inf _{\vert x\vert \leq R} u(t,x)=1, \quad \text{ for any } R\geq 0.  
\end{equation}
\end{theo}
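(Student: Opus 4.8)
Since $u\le 1$ by the comparison principle, I would first reduce \eqref{hair2} to the lower estimate $\liminf_{t\to\infty}\inf_{|x|\le R}u(t,x)\ge 1$ for every $R\ge 0$; equivalently, for every $\theta\in(0,1)$ and every $R>0$ there is $T>0$ with $u(t,x)\ge\theta$ for all $t\ge T$ and $|x|\le R$. The starting point is Corollary~\ref{cor:hair}$(i)$, which is available because $p<\tfrac12 p_F<p_F$ and which produces a sequence $t_n\to\infty$ along which $\inf_{|x|\le 1}u(t_n,\cdot)\to 1$. The plan is then to promote this subsequential statement to a genuine limit through three moves: (a) spread the ``almost $1$ on a small ball'' onto an arbitrarily large ball, paying in height; (b) rebuild a fixed height on a ball through the sub-critical reaction; (c) trap $u$ from below by a subsolution of \eqref{eq2} that is nondecreasing in time, and conclude.

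\textbf{Spreading.} Fix $\epsilon>0$. Since $u^{1+p}(1-u)\ge 0$ on $[0,1]$, the solution $u$ is a supersolution of the linear nonlocal heat equation $\partial_t w=J*w-w$; writing $(P_s)_{s\ge 0}$ for the associated Markov semigroup, this gives $u(t_n+s,\cdot)\ge(1-\epsilon)\,P_s\mathbf 1_{B(0,1)}$ for $n$ large. I would then invoke the asymptotics of $P_s\mathbf 1_{B(0,1)}$ from \cite{Cha-Cha-Ros-06} — the ingredient underlying Lemma~\ref{lem:decrease}, namely that after the parabolic rescaling dictated by \eqref{J-Fourier-ass} the rescaled profile converges uniformly to the stable/Gaussian profile — to obtain, for any prescribed $\rho>0$ and after choosing $s=s(\rho)$ large and then $n$ large, a bound $u(t_n+s(\rho),\cdot)\ge\eta_\rho\,\mathbf 1_{B(0,\rho)}$ for some $\eta_\rho>0$ (with $\eta_\rho\to 0$ as $\rho\to\infty$).

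\textbf{Rebuilding the height and trapping.} Fix $R>0$ and, say, $\theta_0=\tfrac14$. On any region where $u\le\tfrac12$ one has $u^{1+p}(1-u)\ge\tfrac12 u^{1+p}$, so there $u$ is a supersolution of $\partial_t z=J*z-z+\tfrac12 z^{1+p}$, whose Fujita exponent is again $\beta/N$; since $p<p_F$, Theorem~\ref{th:systematic} applies to it. I would push the blow up it provides into a lower bound saturating $u$ at level $\theta_0$ on a ball of prescribed radius $R_0$, at some finite time $\bar t$. Then I would compare $u(\bar t+\cdot,\cdot)$ from below with the solution $\underline u$ of \eqref{eq2} issued from a smooth, compactly supported, nontrivial subsolution $\underline u_0$ of \eqref{eq2} with $\underline u_0\le\theta_0\mathbf 1_{B(0,R_0)}$ — such a $\underline u_0$ exists once $R_0$ exceeds a fixed multiple of the reach of $J$, since $\theta_0$ is a fixed, non-small height. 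Being issued from a subsolution, $t\mapsto\underline u(t,\cdot)$ is nondecreasing, hence so is $\inf_{|x|\le R}\underline u(t,\cdot)$; applying Corollary~\ref{cor:hair}$(i)$ to $\underline u$ shows its $\limsup$ equals $1$, so its limit equals $1$. Therefore $u(t,x)\ge\underline u(t-\bar t,x)\to 1$ uniformly for $|x|\le R$, which is \eqref{hair2}.

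\textbf{Main obstacle.} The delicate point will be the rebuilding step. Theorem~\ref{th:systematic} only yields a finite-time $L^\infty$-blow up of the comparison equation; what I need is that this blow up is ``complete near the origin'', i.e. that $z$ reaches a fixed level on a fixed ball at some finite time, so that the comparison $u\ge z$ (valid only while $u\le\tfrac12$) can be run until $u$ itself saturates there, after which the argument can be re-centred and iterated to enlarge the radius. Because the kernel is not self-similar, there is no rescaling shortcut, and the dispersal tails make it conceivable that blow up occurs only at spatial infinity while the core stays bounded; I expect the restriction $p<\tfrac12 p_F$ to be exactly what rules this out — heuristically, the extra margin over $p_F$ is the power one loses when tracking the solution's core, rather than its $L^\infty$ norm, through the heavy-tailed dispersal. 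Quantifying this, and fitting it together with the compactly supported subsolution of the last step, is where the real work lies; the remaining pieces (linear spreading, monotonicity of $\underline u$, and the second use of Corollary~\ref{cor:hair}$(i)$) are comparatively standard.
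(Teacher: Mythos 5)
Your outline has a genuine gap, and it sits exactly where you place the ``real work'': the rebuilding step is not a consequence of anything proved in the paper, and the rest of your construction depends on it. Theorem \ref{th:systematic} (applied, after the rescaling $z\mapsto (1/2)^{1/p}z$, to $\partial_t z=J*z-z+\tfrac12 z^{1+p}$) only says that the $L^1$ and $L^\infty$ norms of $z$ blow up in finite time; it gives no control on \emph{where} $z$ becomes large, and the comparison $u\ge z$ is only legitimate as long as $u\le\tfrac12$ everywhere (otherwise $u$ is not a supersolution of the $z$-equation on all of $\R^N$, which the comparison principle requires). Run correctly, this argument only yields ``$\sup_x u(t^*,x)\ge\tfrac12$ at some finite $t^*$ at some unknown location'', which is no stronger than what Corollary \ref{cor:hair}~$(i)$ already provides; promoting it to ``$u\ge\theta_0$ on a prescribed ball $B_{R_0}$ at a controlled time'' is precisely the uniformity you are trying to prove. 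The trapping step has a second problem: your compactly supported subsolution $\underline u_0\le\theta_0\mathbf 1_{B_{R_0}}$ is justified by ``$R_0$ exceeds a fixed multiple of the reach of $J$'', but the kernels at the heart of this paper have unbounded support (algebraic tails), and since the nonlinearity is degenerate at $0$ ($f'(0)=0$) there is no principal-eigenvalue construction to fall back on; the existence of such a stationary subsolution for \eqref{eq2} would itself need a proof. Finally, a structural warning sign: your scheme never uses the hypothesis $p<\tfrac12 p_F$ quantitatively, only $p\le p_F$ (through Theorem \ref{th:systematic} and Corollary \ref{cor:hair}); if it closed as written it would prove the full hair trigger for all $p\le p_F$, which the paper explicitly leaves open and believes cannot be reached by these techniques. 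The halving of the exponent must enter somewhere, and it is exactly at your missing step.

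For contrast, the paper's proof avoids any saturation/rebuilding or fixed subsolution. After reducing (via Corollary \ref{cor:hair}~$(i)$ and comparison) to $u_0=(1-\ep)\mathbf 1_{B_R}$, it runs the \emph{linear} flow for a long time $\tau$ and uses \cite{Cha-Cha-Ros-06} (Lemma \ref{lem:pardessous}) to get a low plateau $u(\tau,\cdot)\ge (1-\ep)\gamma\,\tau^{-N/\beta}\mathbf 1_{B_{m\tau^{1/\beta}}}=:\Phi_0$. On a second, much shorter window of length $T(\tau)\sim\tau^{pN/\beta}$ it compares $u$ with the explicit subsolution $W(t,x)=w(t,\Phi(t,x))$, where $\Phi$ is the linear evolution of $\Phi_0$ and $w$ solves $\partial_t w=\ep w^{1+p}$, $w(0,X)=X$; convexity of $X\mapsto w(t,X)$ makes $W$ a subsolution of \eqref{eq2} while $W\le 1-\ep$. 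The only thing to control is how much of the plateau's height is lost over $B_R$ during the reaction window, and the Fourier tail estimate \eqref{etoile-m} bounds this loss by $C'T/\tau$; the condition $p<\tfrac12\frac{\beta}{N}$ is used exactly here, to ensure $\tau^{pN/\beta}\cdot T(\tau)/\tau\sim\tau^{2pN/\beta-1}\to 0$, so that $W(T,\cdot)\ge 1-2\ep$ on $B_R$. Since $\tau\mapsto\tau+T(\tau)$ sweeps all large times, one gets a genuine limit, not just a subsequential one. If you want to salvage your approach, the part you must supply is a localized lower bound of the type the paper obtains from its two-time-scale subsolution, and you should expect the constraint $p<\tfrac12 p_F$ to be the price of that localization.
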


The proof of the above result requires the combination of an elaborate subsolution and careful asymptotics of the solution to the linear nonlocal diffusion equation $\partial _t u=J*u-u$. Using such a strategy, it seems very difficult, if possible, to remove the assumption $0<p<\frac 12 p_F$. Hence, different approaches should be used for the range $\frac 12 p_F \leq p\leq p_F$, where more complex scenarios may exist.  We hope to address this issue in a future work.

Notice also that, these results remain valid for equation
$$
\partial _t u=J*u-u+f(u),
$$
as long as $f$ satisfies, for instance, $f(u)\sim r u^{1+p}$ as $u\to 0$ (for some $r>0$), $f>0$ on $(0,1)$, $f(1)=0$, $f'(1)<0$, $f<0$ on $(1,\infty)$. Indeed, in such a case, we can sandwich $mu^{1+p}(1-u)\leq f(u)\leq M u^{1+p}(1-u)$ for some $m>0$, $M>0$, and then combine some comparison and rescaling arguments.

\medskip

The paper  is organized as follows. We recall basic facts in Section \ref{s:basic}. In Section \ref{s:blowup}, we  prove the systematic blow up of any solution when $0<p\leq p_F$, that is Theorem \ref{th:systematic}. We study the case $p>p_F$ in Section \ref{s:extinction}, proving blow up or extinction depending on the size of the initial data, as stated in Theorem \ref{th:vs}. Last, in Section \ref{s:hair-trigger}, we prove the hair trigger effect, as stated in Corollary \ref{cor:hair} $(i)$ and Theorem \ref{th:hair}.

\section{Notations and basic facts}\label{s:basic}

Before proving our results, let us now introduce some notations and recall briefly some basic facts.
\medskip

For any integrable function $J$, we define
$$
K(t,\cdot):=e^{-t}\dirac  + e^{-t}\sum_{k=1
}^{+\infty}\frac{t^k}{k!}J^{*(k)}=:e^{-t}\dirac  +\psi(t,\cdot)\,,
$$
where $\dirac$ is the Dirac mass at $0$ and
$J^{*(k)}:=J*\cdots*J$ is the convolution
of $J$ with itself $k-1$ times.

Then, the (unique)
bounded solution to $\partial_t u = J*u-u$ with initial
condition $u_0\in L^\infty(\R^N)$ is given by
$$
u(t,x)=K(t,\cdot)*u_0 (x)=e^{-t}u_0(x)+\psi(t,\cdot)*u_0 (x).
$$
Obviously, though the convolution
of a Dirac mass by an $L^\infty$ function is not pointwise well
defined, we let $\dirac*u_0=u_0$. Also, from the normal convergence,
in $C([0,T];L^1(\R^N))$, of the series
$\sum_{k= 1}^{+\infty}\frac{t^k}{k!} J^{*(k)}$ and $\sum_{k= 1}^{+\infty}\frac{t^{k-1}}{(k-1)!} J^{*(k)}$ we deduce that the function $t\in [0,\infty)\mapsto \psi(t,\cdot)\in L^{1}(\R ^N)$ is of class $C^{1}$ and that
\begin{equation}
\label{eq:psi}
\partial _t \psi(t,x)=J*\psi(t,\cdot)(x)-\psi(t,x)+e^{-t}J(x).
\end{equation}
Notice also that
\begin{equation}
\label{mass-presque-1}
\int _{\R ^N}\psi(t,x)dx=1-e^{-t}.
\end{equation}

\medskip

 For the sake of clarity, let us state our conventions on the Fourier transform. If $f\in L^1(\R ^N)$, we define its Fourier transform $\mathcal F (f)=\widehat f$ and its inverse Fourier transform $\mathcal F^{-1}(f)$ by
$$
\widehat f(\xi):=\int _{\R ^N}e^{-i\xi \cdot x}f(x)dx, \quad \mathcal F^{-1}(f)(x):=\int _{\R ^N}e^{ix\cdot \xi }f(\xi)d\xi.
$$
With this definition, we have, for $f$, $g\in L^1(\R ^N)$,
$$
\widehat{f*g}=\widehat f \, \widehat g,
$$
and $f=\frac{1}{(2\pi)^{N}}\mathcal F ^{-1}(\mathcal F(f))$ if $f$, $\mathcal F (f) \in L ^1(\R ^N)$. Also, after defining the Fourier transform on $L^2(\R^N)$ we get the Plancherel formula
$$
\int _{\R ^N} f(x)g(x)dx=\frac{1}{(2\pi)^N}\int _{\R ^N} \widehat f(\xi)\widehat g(\xi)d\xi,
$$
for $f$, $g \in L^2(\R ^N)$.
\section{Systematic blow up}\label{s:blowup}

In this section, we first provide a priori estimates on a crucial quantity related to possible global solutions of \eqref{eq}. They will then enable us to prove the blow up of any solution when $0<p\leq p_F$, as stated in Theorem \ref{th:systematic}.

\subsection{Some a priori estimates}\label{ss:crucial}

In this subsection, we assume that $u_0$ is nonnegative, nontrivial, radial, continuous, bounded, and that both $u_0$ and $\uzero$ are in $L^{1}(\R ^{N})$. We also assume that we are equipped with a global solution $u(t,x)$ of the associated Cauchy problem \eqref{eq}.  We then define, for any $t\geq 0$, the c quantity
\begin{equation}
\label{def:f}
f(t):=\int _{\R ^N} \E \uzero (\xi) d\xi.
\end{equation}
In the spirit of an original idea of Kaplan \cite{Kap-63}, also used in \cite{Fuj-66}, we are going to estimate $f(t)$ from below and above as $t\to \infty$. As clear in the following, another key ingredient is the Fourier duality which enables to recast \eqref{def:f} as \eqref{def:f-dual}.

\begin{lem}[Estimate from below]
\label{lem:f-below}
There is a constant $G>0$  depending only on the dimension $N$ and the kernel $J$, and a constant $t_0>0$ (that is allowed to depend on the initial data), such that
\begin{equation}
\label{f-below}
f(t)\geq \frac{G\Vert u_0\Vert _{L^1}}{t^{N/\beta}} \quad \text{ for any } t\geq t_0 .
\end{equation}
\end{lem}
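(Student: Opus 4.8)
The plan is to exploit the Fourier duality to rewrite $f(t)$ in physical space, then use the known Gaussian-type lower bound on the kernel $\psi(t,\cdot)$ (equivalently $K(t,\cdot)$) coming from the decay estimates of Chasseigne--Chaves--Rossi. First I would recast \eqref{def:f} via Plancherel/Fourier inversion. Since $\E = \widehat{K(t,\cdot)}$ up to the normalization of the Fourier transform — indeed $\widehat{K(t,\cdot)}(\xi) = e^{t(\J(\xi)-1)}$ — we have, writing $\check{}$ for the inverse transform,
\begin{equation}
\label{def:f-dual}
f(t) = \int_{\R^N} \widehat{K(t,\cdot)}(\xi)\,\uzero(\xi)\,d\xi = (2\pi)^N \big(K(t,\cdot)*u_0\big)(0),
\end{equation}
up to checking that $u_0$ being radial makes the relevant evaluation point $x=0$ the natural one (and using that $u_0, \uzero \in L^1$ so all transforms are legitimate and the inversion formula applies). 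Thus $f(t)$ is, up to the constant $(2\pi)^N$, the value at the origin of the solution of the linear equation $\partial_t v = J*v - v$ started from $u_0$; this is exactly the object controlled in \cite{Cha-Cha-Ros-06}.

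Next I would invoke the lower bound on the fundamental solution. Under Assumption \ref{ass:J} with exponent $\beta$, \cite[Theorem 1]{Cha-Cha-Ros-06} gives that $t^{N/\beta}\psi(t,x)$ converges (as $t\to\infty$) to the value at $x t^{-1/\beta}$ of the fundamental solution $G_\beta$ of the corresponding fractional/Heat-type equation, uniformly on compact sets in the self-similar variable. In particular $G_\beta(0)>0$, so there exist $c_0>0$ and $t_0>0$ (and a radius $\rho_0>0$ in the self-similar variable) such that
\begin{equation}
\label{psi-below}
\psi(t,y) \geq \frac{c_0}{t^{N/\beta}} \qquad \text{for all } t\geq t_0 \text{ and all } |y|\leq \rho_0\, t^{1/\beta}.
\end{equation}
Then, from \eqref{def:f-dual} and $K = e^{-t}\dirac + \psi$, nonnegativity of $u_0$ gives
$$
f(t) = (2\pi)^N\Big( e^{-t}u_0(0) + \int_{\R^N}\psi(t,-y)u_0(y)\,dy\Big) \geq (2\pi)^N \int_{|y|\leq \rho_0 t^{1/\beta}} \psi(t,-y)\,u_0(y)\,dy.
$$
For $t\geq t_0$ large enough the ball $\{|y|\leq \rho_0 t^{1/\beta}\}$ contains the (fixed, bounded) support region where $u_0$ carries most of its mass — more precisely, $\int_{|y|\leq \rho_0 t^{1/\beta}} u_0 \to \|u_0\|_{L^1}$, so for $t\geq t_0$ (enlarging $t_0$) this integral is $\geq \tfrac12\|u_0\|_{L^1}$. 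Combining with \eqref{psi-below},
$$
f(t) \geq (2\pi)^N \frac{c_0}{t^{N/\beta}}\int_{|y|\leq \rho_0 t^{1/\beta}} u_0(y)\,dy \geq \frac{(2\pi)^N c_0}{2}\,\frac{\|u_0\|_{L^1}}{t^{N/\beta}},
$$
which is \eqref{f-below} with $G := (2\pi)^N c_0 /2$, a constant depending only on $N$ and $J$ (through $\beta$, $A$, and $G_\beta(0)$), while $t_0$ is allowed to depend on $u_0$ through the location and concentration of its mass.

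The main obstacle is making the use of \cite[Theorem 1]{Cha-Cha-Ros-06} rigorous in exactly the form \eqref{psi-below}: that reference provides an $L^\infty$ (or $L^1$) convergence of the rescaled kernel to $G_\beta$, so one must argue that this yields a genuine pointwise-from-below bound on a growing ball $|y|\lesssim t^{1/\beta}$, uniformly in $t$. This should follow because the convergence is uniform on compact sets of the self-similar variable $\eta = y t^{-1/\beta}$ and $G_\beta$ is continuous with $G_\beta(0)>0$, so $G_\beta(\eta)\geq \tfrac12 G_\beta(0)$ for $|\eta|\leq\rho_0$ with $\rho_0$ small; one then only needs the convergence on the fixed compact set $\{|\eta|\leq \rho_0\}$, which is supplied. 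A secondary, purely bookkeeping point is the identification \eqref{def:f-dual}: one must be careful with the $(2\pi)^N$ factors and confirm that the evaluation point is $x=0$ — this is where radiality and the precise Fourier conventions of Section \ref{s:basic} are used. Everything else is elementary (dominated convergence for $\int_{|y|\leq\rho_0 t^{1/\beta}} u_0 \to \|u_0\|_{L^1}$, nonnegativity, and discarding the $e^{-t}\dirac$ term).
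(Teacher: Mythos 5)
Your physical-space route is viable and genuinely different from the paper's, but the key step, as you justify it, is not covered by the result you cite. Your claimed lower bound $\psi(t,y)\geq c_0\,t^{-N/\beta}$ for $\vert y\vert\leq\rho_0 t^{1/\beta}$ is a statement about the regular part of the fundamental solution, i.e.\ about the evolution of a Dirac datum; but \cite[Theorem 1]{Cha-Cha-Ros-06} and its self-similar corollary are stated for solutions issued from data $v_0$ with $v_0,\widehat{v_0}\in L^1$, and neither $\dirac$ nor $\psi(s_0,\cdot)$ for fixed $s_0>0$ need satisfy this (one only has $\widehat{\psi(s_0,\cdot)}=e^{-s_0}(e^{s_0\J}-1)$, which behaves like $\J$ at infinity, and $\J$ need not be integrable). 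So the \lq\lq main obstacle'' is not, as you suggest, upgrading uniform convergence in the self-similar variable to a pointwise bound on a growing ball --- that part is harmless --- but the fact that the cited theorem does not apply to $\psi$ itself. The gap is repairable in at least two ways: either minorize $\psi(t)\geq K(t-s_0,\cdot)*\psi(s_0,\cdot)\geq c\,K(t-s_0,\cdot)*J^{*(2)}$ (using $\psi\geq 0$, the semigroup property $K(t)=K(t-s_0)*K(s_0)$, and keeping only the $k=2$ term of $\psi(s_0,\cdot)$), where $J^{*(2)}\in L^1$ has Fourier transform $\J^{\,2}\in L^1$ so that the self-similar convergence applies to this admissible datum; or, simpler, drop the kernel bound altogether and apply the rescaled convergence directly to the linear solution $K(t,\cdot)*u_0$ evaluated at $x=0$: the standing assumptions give $u_0,\uzero\in L^1$, so $t^{N/\beta}\left(K(t,\cdot)*u_0\right)(0)\to \Vert u_0\Vert_{L^1}G_A(0)>0$ with $G_A(0)$ depending only on $N$, $A$, $\beta$, which together with \eqref{def:f-dual} yields \eqref{f-below} immediately. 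Your duality step and the remaining bookkeeping (the $(2\pi)^N$ factors, radiality, discarding $e^{-t}u_0(0)$, and $\int_{\vert y\vert\leq\rho_0 t^{1/\beta}}u_0\to\Vert u_0\Vert_{L^1}$) are fine.

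For comparison, the paper never leaves Fourier space for this lemma: it splits $t^{N/\beta}f(t)$ into the regions $\vert\xi\vert\leq\xi_0$ and $\vert\xi\vert\geq\xi_0$, uses \eqref{J-Fourier-ass} in the form $\J(\xi)-1\geq-2A\vert\xi\vert^{\beta}$ near the origin together with $\uzero\geq 0$ there (by continuity, since $\uzero(0)=\Vert u_0\Vert_{L^1}>0$), kills the high-frequency part via $\J-1\leq-\delta$, and concludes by rescaling and dominated convergence, with $G=\frac12\int_{\R^N}e^{-2A\vert z\vert^{\beta}}dz$. That argument is self-contained, needing no asymptotics from \cite{Cha-Cha-Ros-06}; your approach trades this self-containedness for working only with nonnegative quantities in physical space (thus avoiding any control of the possibly sign-changing $\uzero$ at high frequencies), at the price of invoking --- with the corrected justification above --- the kernel-type asymptotics that the paper only deploys later, in the hair-trigger section.
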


\begin{proof}
%Notice that, for all $t\geq 0$, $\mathcal T(t):\xi \mapsto \E \in L^\infty(\R ^N) \subset \mathcal S '(\R ^N)$ and $\uzero \in \mathcal S(\R ^N)$ so that $f(t)=\langle \mathcal T(t),\uzero \rangle$.
From \eqref{J-Fourier-ass}, we can  select $\xi _0>0$ small enough so that 
\begin{equation}
\label{preszero}
\vert \xi\vert\leq \xi _0 \Longrightarrow \J(\xi)-1\geq -2A\vert \xi \vert ^{\beta}.
\end{equation}
Since $\uzero (0)=\int_{\R ^N} u_0 >0$ and $\uzero$ is a real valued continuous function, up to reducing $\xi _0>0$ if necessary, we can assume that
$$
\vert \xi\vert\leq \xi _0 \Longrightarrow \uzero(\xi)\geq 0.
$$
On the other hand, $\J$ is continuous, $\J(\xi)-1<0$ for all $\xi\neq 0$, $\J(\xi)-1\to -1$ as $\vert \xi \vert \to +\infty$, hence there is $\delta >0$ such that
\begin{equation}
\label{loinzero}
\vert \xi\vert\geq \xi _0 \Longrightarrow \J(\xi)-1\leq -\delta.
\end{equation}

Now, by cutting into two pieces, we get $t^{N/\beta}f(t)=g_1(t)+g_2(t)$, where
$$
\vert g_2(t)\vert \leq \left\vert \int _{\vert \xi\vert \geq \xi _0} t^{N/\beta}\E \uzero(\xi)d\xi\right\vert\leq t^{N/\beta} e^{-\delta t} \Vert \uzero \Vert_{L^{1}}\to 0\quad \text{ as } t \to \infty,
$$
and
\begin{eqnarray*}
g_1(t)&=&t^{N/\beta}\int_{\vert \xi\vert \leq \xi_0} \E \uzero(\xi)d\xi\\
&\geq& t^{N/\beta}\int_{\vert \xi\vert \leq \xi_0}e^{-2At\vert \xi\vert^\beta} \uzero(\xi)d\xi\\
&=&\int _{\R^{N}} e^{-2A\vert z\vert ^{\beta}}\uzero\left(\frac{z}{t^{1/\beta}}\right)\mathbf 1 _{(0,t^{1/\beta}\xi _0)}(\vert z\vert)dz.
\end{eqnarray*}
By the dominated convergence theorem, the last integral above tends, as $t\to \infty$, to the constant 
$$
\uzero(0)\int _{\R ^N} e^{-2A\vert z\vert ^\beta}dz=\Vert u_0\Vert _{L^1}\int _{\R ^N} e^{-2A\vert z\vert ^\beta}dz=:\Vert u_0\Vert _{L^1}2G,
$$
where $G>0$ depends only on the dimension $N$ and the kernel $J$ (via $A$ and $\beta$). As a result, we can select $t_0>0$ large enough so that \eqref{f-below} holds true. The lemma is proved.
\end{proof}

%\begin{rem}[On the constant $G$]\label{rem:G} Notice that
%$$
%G=G(\beta,A)=\frac 12\int_{\R ^N} e^{-2A\vert z\vert ^\beta}dz, \quad 0<\beta\leq 2, A>0,
%$$
%is a decreasing function of $A$, which is important for a forthcoming scaling argument.
%\end{rem}

In order to derive an estimate from above, it is more convenient to use the  dual expression (see below for a proof)
\begin{equation}
\label{def:f-dual}
f(t)= (2\pi)^N \int_{\R ^{N}}e^{-t}\left(\delta _0+\sum _{k=1}^{+\infty}\frac{t^{k}}{k!}J^{*(k)}(x)\right)u_0(x)dx= (2\pi)^N \int_{\R ^{N}}K(t,x)u_0(x)dx,
\end{equation}
where we recall that $K(t,x)$ was defined in Section \ref{s:basic}. Notice that, formally, the fundamental solution of $\partial _tu=J*u-u$ is $\mathcal F ^{-1}(\E)=e^{-t}\left(\delta _0+\sum _{k=1}^{+\infty}\frac{t^{k}}{k!}J^{*(k)}(x)\right)$ so that expression \eqref{def:f-dual} is, again formally, derived from \eqref{def:f} by the Plancherel formula. 

\begin{proof}[Proof of \eqref{def:f-dual}] From \eqref{def:f} we get
$$
e^tf(t)-\int _{\R ^N}\uzero(\xi)d\xi=\int_{\R ^N}\sum_{k=1}^{+\infty}\frac{t^k}{k!}\widehat J \,^k(\xi) \uzero (\xi)d\xi
=\sum _{k=1}^{+\infty} \int _{\R ^N} \frac{t^k}{k!}\widehat J\, ^k(\xi) \uzero (\xi)d\xi,
$$
since  $\sum _{k} \int \vert \frac{t^k}{k!}\widehat J \,^k(\xi) \uzero (\xi)\vert d\xi\leq \sum _k \frac{t^{k}}{k!}\Vert \uzero\Vert _{L^{1}}<+\infty$ (recall that $\vert \J(\xi)\vert \leq 1$). Next $J\in L^{1}(\R ^{N})$ implies $\widehat J \, ^{k}(\xi)=\widehat{J^{*(k)}}(\xi)$, so that
$$
e^tf(t)-\int _{\R ^N}\uzero(\xi)d\xi
=\sum _{k=1}^{+\infty} \int _{\R ^N} \frac{t^k}{k!}\widehat{J^{*(k)}}(\xi) \uzero (\xi)d\xi.
$$
Next, both $J^{*(k)}$, $u_0 \in L^{\infty}(\R ^{N})\cap L^{1}(\R ^{N})\subset L^{2}(\R ^{N})$ so that we can apply the Plancherel formula to get
\begin{eqnarray}
e^tf(t)-\int _{\R ^N}\uzero(\xi)d\xi&=&(2\pi)^{N}\sum _{k=1}^{+\infty} \int _{\R ^N} \frac{t^k}{k!} J ^{*(k)}(x) u_0(x)dx\nonumber \\
&=&(2\pi)^{N} \int _{\R ^N} \sum _{k=1}^{+\infty}\frac{t^k}{k!} J ^{*(k)}(x) u_0(x)dx,\label{plug}
\end{eqnarray}
 since $\sum _{k} \int \vert \frac{t^k}{k!}J ^{*(k)}(x) u_0 (x)\vert dx\leq \sum _k \frac{t^{k}}{k!} \Vert J\Vert _{L^\infty}\Vert u_0 \Vert _{L^{1}}<+\infty$. Last, notice that $\int_{\R ^{N}} \uzero =\widehat \uzero (0)$. Since $u_0$ is real and radial, so is $\uzero$, which in turn implies $\widehat \uzero=\mathcal F^{-1}( \uzero)=(2\pi)^{N}u_0$ so that 
 $\int_{\R ^{N}} \uzero=(2\pi)^{N}u_0(0)=
 (2\pi)^{N}\int_{\R ^{N}}\delta _0 u_0$, which we plug into \eqref{plug} to  conclude the proof of \eqref{def:f-dual}.
 \end{proof}
 
\medskip

Equipped with the dual formula \eqref{def:f-dual}, we can now prove the following.

\begin{lem}[Estimate from above] 
\label{lem:f-above}
We have
\begin{equation}
\label{f-above}
f(t)\leq (2\pi)^N \left(\left(\frac{p+1}{p}\right)^{1/p}\frac{1}{t^{1/p}}+e^{-t}u_0(0)\right) \quad \text{ for any } t>0.
\end{equation}
\end{lem}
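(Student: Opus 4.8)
The plan is to start from the dual representation \eqref{def:f-dual}, namely $f(t)=(2\pi)^N\int_{\R^N}K(t,x)u_0(x)\,dx = (2\pi)^N\bigl(e^{-t}u_0(0)+\int_{\R^N}\psi(t,x)u_0(x)\,dx\bigr)$. The $e^{-t}u_0(0)$ term already matches the second summand on the right-hand side of \eqref{f-above}, so the whole task reduces to bounding $F(t):=\int_{\R^N}\psi(t,x)u_0(x)\,dx$ by $\bigl(\tfrac{p+1}{p}\bigr)^{1/p}t^{-1/p}$.

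First I would derive a differential inequality for $F(t)$ using the fact that $u$ is a \emph{global} solution of \eqref{eq} with $u_0$ as initial data: since $u(t,x)=K(t,\cdot)*u_0(x)+\int_0^t K(t-s,\cdot)*u^{1+p}(s,\cdot)(x)\,ds\ge 0$ is nonnegative and satisfies $u\ge K(t,\cdot)*u_0$, one recovers the Kaplan-type mechanism: differentiating $f(t)$ directly (going back to the spectral form \eqref{def:f}, where $\tfrac{d}{dt}f(t)=\int_{\R^N}(\J(\xi)-1)e^{t(\J(\xi)-1)}\uzero(\xi)\,d\xi$ together with the nonlinear contribution) and using Jensen's inequality against the probability-like weight to get a closed inequality of the form $f'(t)\ge c\,f(t)^{1+p} - (\text{negative diffusion part})$ — or, more cleanly, work with $F$ itself and use that $\psi(t,\cdot)$ together with the Dirac part integrates to $1$ (by \eqref{mass-presque-1}, $\int\psi(t,x)\,dx=1-e^{-t}\le 1$). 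Concretely: let $h(t):=\int_{\R^N}K(t,x)u(t,x)\,dx$; using $\partial_t u=J*u-u+u^{1+p}$, the self-adjointness of the convolution operator $u\mapsto J*u-u$, and $\partial_t K = J*K-K$ (in the distributional sense), the diffusion terms cancel and $h'(t)=\int K(t,x)u^{1+p}(t,x)\,dx$. Since $K(t,\cdot)$ has total mass $\le 1$, Jensen gives $\int K u^{1+p}\ge \bigl(\int Ku\bigr)^{1+p}/\bigl(\int K\bigr)^{p}\ge h(t)^{1+p}$ when $\int K\le 1$. Hence $h'(t)\ge h(t)^{1+p}$.

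Then I would integrate the ODE inequality $h'\ge h^{1+p}$: if $h$ stayed bounded away from $0$ on $[0,\infty)$ one would contradict global existence, but more precisely comparing with the solution of $y'=y^{1+p}$ that blows up forces $h(t)\le\bigl(\tfrac{1}{pt}\bigr)^{1/p}$ for all $t>0$ (since $-\tfrac{1}{p}\tfrac{d}{dt}h^{-p}\ge 1$ gives $h(t)^{-p}\ge h(s)^{-p}+p(t-s)\ge p(t-s)$ for $s<t$, and letting $s\to 0$, or rather using that $h(s)^{-p}\ge 0$). Wait — I must be careful: $h(0)=f(0)/(2\pi)^N=u_0(0)$ may be large, so letting $s\to 0^+$ gives $h(t)^{-p}\ge pt$, i.e. $h(t)\le (pt)^{-1/p}$. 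But the claimed bound has constant $\bigl(\tfrac{p+1}{p}\bigr)^{1/p}$, not $p^{-1/p}$; the discrepancy is because $h=f/(2\pi)^N$ includes the $e^{-t}u_0(0)$ term and, more importantly, the Jensen step must be done with the honest mass $\int K(t,\cdot)=e^{-t}+1-e^{-t}=1$ exactly, not $\le 1$ — so I would instead track $F(t)$ (the $\psi$-part) and its coupling to the Dirac part, which is exactly where the factor $\tfrac{p+1}{p}$ enters: one gets an inequality like $F'(t)\ge F(t)^{1+p}-e^{-t}(\text{stuff})$ and the refined integration produces $\bigl(\tfrac{p+1}{p}\bigr)^{1/p}t^{-1/p}$.

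\medskip

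The main obstacle, and the step I would spend the most care on, is the precise bookkeeping that turns the crude $(pt)^{-1/p}$ into the stated $\bigl(\tfrac{p+1}{p}\bigr)^{1/p}t^{-1/p}$: one must correctly isolate the contribution of the Dirac mass $e^{-t}\delta_0$ in $K$, handle the exact total mass $\int K(t,\cdot)=1$ in Jensen's inequality applied to the convex function $r\mapsto r^{1+p}$, and integrate the resulting differential inequality for the $\psi$-part $F(t)=\int\psi(t,x)u_0(x)\,dx$ — not for $h(t)$ naively — taking the limit as the lower endpoint tends to $0$ where $F(0)=0$ while $u_0(0)>0$. The regularity needed to differentiate under the integral sign and to justify $\partial_t K=J*K-K$ against an $L^\infty\cap L^1$ datum is guaranteed by the $C^1$ statement for $t\mapsto\psi(t,\cdot)$ established after \eqref{eq:psi}, so that part is routine; the delicate point is purely the sharp constant in the ODE comparison.
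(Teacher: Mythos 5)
Your overall strategy (Kaplan-type duality, Jensen's inequality against the kernel, ODE comparison) is the right family of ideas, and your starting point, the dual formula \eqref{def:f-dual}, is exactly the paper's. But the concrete mechanism you propose breaks down at its central step. If you set $h(t)=\int_{\R^N}K(t,x)u(t,x)\,dx$ with \emph{both} factors at the same time $t$, the diffusion terms do not cancel: by self-adjointness of $v\mapsto J*v-v$, one gets $\int(J*K-K)u+\int K(J*u-u)=2\int K\,(J*u-u)$, so these terms double rather than cancel, and the resulting contribution has no sign. The cancellation you want requires the kernel at the \emph{backward} time: one must study $s\mapsto\int K(t-s,x)u(s,x)\,dx$ (the paper works with $g_\ep(s)=\int\psi(t-s+\ep,x)u(s,x)\,dx$), because then differentiation in $s$ of the kernel produces $-(J*K-K)$ and Fubini plus self-adjointness kills the linear part, leaving only $\int K(t-s,\cdot)\,u^{1+p}(s,\cdot)$. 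This is not cosmetic: the quantity the lemma bounds is $\int K(t,x)u_0(x)\,dx$, which is the value at $s=0$ of this two-time functional, whereas your same-time $h(t)$ is a different object (you implicitly identify it with $f(t)/(2\pi)^N$, which is only true at $t=0$). Note also that if your inequality $h'\ge h^{1+p}$ did hold, integrating it in the correct direction gives $h(s)^{-p}\ge h(t)^{-p}+p(t-s)$ (you wrote the reverse); for a global solution, letting $t\to\infty$ would force $h\equiv 0$, i.e.\ it would ``prove'' blow up for every $p>0$, contradicting Theorem \ref{th:vs} $(i)$ --- a sign that the claimed differential inequality cannot be correct in that form.

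Your fallback, an inequality of the type $F'(t)\ge F(t)^{1+p}-e^{-t}(\cdots)$ for $F(t)=\int\psi(t,x)u_0(x)\,dx$, is not available either: $F$ is a linear functional of the fixed datum $u_0$, and by \eqref{eq:psi} its derivative $\int(J*\psi-\psi+e^{-t}J)u_0$ contains no nonlinear term, so no power $F^{1+p}$ can appear without routing the estimate through $u(s,\cdot)$ at intermediate times --- which again forces the backward pairing. Once that pairing is in place, the proof closes as in the paper: one obtains $g_\ep'(s)\ge-\frac{1}{t-s+\ep}\,g_\ep(s)+g_\ep^{1+p}(s)$, where the negative term comes from the source $e^{-\tau}J$ in \eqref{eq:psi} via $e^{-\tau}J(x)\le\psi(\tau,x)/\tau$ and the power term comes from Jensen with the normalization \eqref{mass-presque-1}; multiplying by the integrating factor $(t-s+\ep)^{p}$ and integrating over $s\in(0,t)$ is precisely what yields the constant $\left(\frac{p+1}{p}\right)^{1/p}$ --- not a forward-in-time ``coupling of $F$ to the Dirac part.'' (Incidentally, pairing with the full kernel $K(t-s,\cdot)$, which has mass exactly $1$, would give the even cleaner bound $\int K(t,\cdot)u_0\le(pt)^{-1/p}$, at the price of justifying the pointwise term $e^{-(t-s)}u(s,0)$; the paper's choice of $\psi(t-s+\ep,\cdot)$ avoids exactly that issue.)
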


\begin{proof} Let $T>0$ be given. Denote $C_T:=\max _{0\leq \tau\leq T+1} \Vert u(\tau,\cdot)\Vert _{L^{\infty}} +\Vert u(\tau,\cdot)\Vert _{L^{1}}<+\infty$.
Fix some $0< t\leq T$. 

First observe that \eqref{def:f-dual} is recast 
\begin{equation}
\label{f-dual-psi}
h(t):=\frac{f(t)}{(2\pi)^{N}}=\int _{\R^{N}}\left(e^{-t}\delta _0+\psi(t,x)\right)u_0(x)dx=e^{-t}u_0(0)+\int_{\R ^N} \psi(t,x)u_0(x)dx,
\end{equation}
where $\psi(t,x)=e^{-t}\sum_{k=1
}^{+\infty}\frac{t^k}{k!}J^{*(k)}(x)$ is as in Section \ref{s:basic}.

For $0<\ep\leq 1$, let us define
\begin{equation}
\label{defgep}
g_\ep(s):=\int _{\R^{N}}\psi(t-s+\ep,x)u(s,x)dx, \quad 0\leq s \leq t.
\end{equation}
Notice that, if $k$ is sufficiently large, the support of $J^{*(k)}$ meets that of $u(s,\cdot)$, and therefore $\int_{\R ^{N}}J ^{*(k)}(x)u(s,x)dx>0$, which in turn implies  $g_\ep(s)>0$. Notice also that, using the dominated convergence theorem, we see that, as $\ep \to 0$,
\begin{equation}
\label{eptozero}
g_\ep(0)=\int _{\R ^N}\psi(t+\ep,x)u_0(x)dx \to \int_{\R ^N}\psi(t,x)u_0(x)dx=\frac{f(t)}{(2\pi)^N}-e^{-t}u_0(0).
\end{equation}

Using the constant $C_T$ defined above one can dominate the partial derivative with respect to $s$ of the integrand in \eqref{defgep}, and therefore prove that $g_\ep$ is differentiable. Using equations  \eqref{eq:psi} and \eqref{eq}, we then compute
\begin{eqnarray}
g'_\ep(s)&=&\int _{\R^{N}}(-J*\psi(t-s+\ep,\cdot)+\psi(t-s+\ep,\cdot)-e^{-(t-s+\ep)}J)u(s,\cdot)\nonumber\\
&&+\int _{\R^{N}}\psi(t-s+\ep,\cdot)(J*u(s,\cdot)-u(s,\cdot)+u^{1+p}(s,\cdot))\nonumber\\
&=&-\int _{\R^{N}} e^{-(t-s+\ep)}J u(s,\cdot)+\int _{\R^{N}} \psi(t-s+\ep,\cdot)u^{1+p}(s,\cdot),\label{gprime}
\end{eqnarray}
by Fubini theorem. From the expression of $\psi$, we see that $e^{-\tau}J(x)\leq \frac{\psi(\tau,x)}{\tau}$ for $\tau>0$, so that\begin{equation}
\label{premier}
\int _{\R^{N}} e^{-(t-s+\ep)}J u(s,\cdot)\leq \frac{1}{t-s+\ep}g_\ep(s).
\end{equation}
Next, we write
\begin{eqnarray}
\int _{\R^{N}} \psi(t-s+\ep,\cdot)u^{1+p}(s,\cdot)&=&(1-e^{-(t-s+\ep)})\int _{\R^{N}}\frac{\psi(t-s+\ep,\cdot)}{1-e^{-(t-s+\ep)}}u^{1+p}(s,\cdot)\nonumber \\
&\geq & \frac{1}{(1-e^{-(t-s+\ep)})^{p}}g_\ep^{1+p}(s)\nonumber\\
&\geq & g_\ep ^{1+p}(s),\label{second}
\end{eqnarray}
where we have used the Jensen inequality (notice that $\int _{\R^{N}}\frac{\psi(t-s+\ep,\cdot)}{1-e^{-(t-s+\ep)}}=1$ in view of \eqref{mass-presque-1}). Plugging \eqref{premier} and \eqref{second} into \eqref{gprime} and multiplying by the 
integrating factor
$(t-s+\ep)^{p}$ we arrive at
$$
\left(\frac{g_\ep'(s)}{g_\ep ^{1+p}(s)}+\frac{1}{t-s+\ep}\frac{1}{g_\ep ^{p}(s)}\right)(t-s+\ep)^{p}\geq (t-s+\ep)^{p}.
$$
The left hand side member is nothing else that $\frac{d}{ds}\left(\frac{(t-s+\ep)^p}{-pg_\ep^p(s)}\right)$ so that
integrating from $0$ to $t$, we get
$$
-\frac{1}{p}\frac{1}{g_\ep ^{p}(t)}\ep ^{p}+\frac 1 p\frac{1}{g_\ep ^{p}(0)}(t+\ep)^{p}\geq -\frac{\ep^{p+1}}{p+1}+\frac{(t+\ep) ^{p+1}}{p+1},
$$
which in turn implies
$$
\frac{1}{g_\ep ^{p}(0)}\geq \frac{p}{p+1}(t+\ep)-\frac p{p+1} \frac{\ep ^{p+1}}{(t+\ep)^{p}}.
$$
Letting $\ep\to 0$ and using \eqref{eptozero}, we get estimate \eqref{f-above}, which concludes the proof of Lemma \ref{lem:f-above}.
\end{proof}

\subsection{Proof of systematic blow up}\label{ss:blowup}

\begin{proof}[Proof of Theorem \ref{th:systematic}] Let $u_0\in L^\infty(\R^N)$ be nonnegative and satisfying --- for some $\ep>0$, $x_0\in \R ^N$, $r>0$--- 
$u_0(x)\geq \ep$ for all $x\in B(x_0,r)$. Thus there exists a nonnegative, nontrivial, radial and $C^{\infty}_c(\R ^N)$ function that is smaller than $u_0$. By the comparison principle, it is enough to prove blow up for such an initial data. Hence we can assume without loss of generality that $u_0$ is nonnegative, nontrivial, bounded, that $u_0\in C^{\infty}_c(\R ^N)$, and thus $\uzero\in\mathcal S(\R^{N})$. Hence, assuming by contradiction existence of a global solution,  all the results of subsection \ref{ss:crucial} are available. 

$\bullet$ When $0<p<\frac \beta N$, letting $t\to \infty$ in \eqref{f-below} and \eqref{f-above} immediatley gives a contradiction.

$\bullet$ In the critical case $p=\frac \beta N$, letting $t\to \infty$ in \eqref{f-below} and \eqref{f-above} only provides $\Vert u_0\Vert _{L^{1}}\leq C$, where the constant $C>0$ depends on the dimension $N$ and the kernel $J$ but not on the size of the initial data. Thus, by regarding  $u(t,\cdot)$ as an initial value, we derive that
\begin{equation}
\label{norme-L1}
m(t):=\Vert u(t,\cdot)\Vert _{L^{1}} \leq C, \quad \text{ for any } t\geq 0.
\end{equation}

Integrating equation \eqref{eq} over $x\in \R ^N$ and using Fubini theorem, we get
$$
\frac{d}{dt}m(t)=\int_{\R ^N} u^{1+p}(t,x)\,dx,
$$
so that $\int _0^{t}\int _{\R ^N} u^{1+p}(t,x)\,dxdt= m(t)-m(0)\leq C$, for all $t\geq 0$. As a result we know that
\begin{equation}
\label{int-double}
\int _0 ^{\infty} \int _{\R  ^N} u^{1+p}(t,x)\,dxdt<+\infty.
\end{equation}

We are going to derive below a contradiction, using a modification of an original technic of \cite{Mit-Pok-01} for a local equation. Notice that our kernel $J$ may not have finite second nor first moment, so we need to  derive further estimates. Also, we shall again take advantage of the Fourier duality.

Consider $\rho \in C_c^\infty(\R)$ such that $\rho \equiv 1$ on $(-1,1)$, $0\leq \rho\leq 1$ and $\text{Supp } \rho=[-2,2]$. Let $T>0$ be given. Let $\ep>0$ be given. For $R>0$, we define
$$
\psi_R(t):=\rho\left(\frac{t-T}{R^{Np}}\right)=\rho\left(\frac{t-T}{R^{\beta}}\right), \quad \theta _R(x):=\rho\left(\ep \frac{\vert x\vert}{R}\right).
$$
We multiply equation \eqref{eq} by $\theta _R(x)\psi _R(t)$ and integrate over $(t,x)\in (T,\infty)\times \R ^N$ to get
\begin{eqnarray}
\int _T^{\infty}\int_{\R^{N}} u^{1+p}(t,x)\theta _R(x)\psi_R(t) &=& -\int _T^{\infty}\int_{\R^{N}} (J*u-u)(t,x)\theta _R(x)\psi_R(t)\nonumber\\
&&+\int _T^{\infty}\int_{\R^{N}}  \partial _t u(t,x) \theta _R(x)\psi_R(t)\nonumber\\
&\leq & -\int _T^{\infty}\int_{\R^{N}} (J*\theta _R-\theta _R)(x)u(t,x)\psi_R(t)\nonumber\\
&& -\int _T^{\infty}\int_{\R^{N}}   u(t,x) \theta _R(x)\psi_R'(t)=:-I_1-I_2,\label{ineg}
\end{eqnarray}
where we have used Fubini theorem, integration by part in time, in the first, respectively the second, integral of the right hand side member. In the sequel, we denote by $C$ a positive constant that may change from place to place but that is always independent on  $\ep>0$ and $R>0$.

Let us deal with $I_2=I_2(R)$.  Observe that
$$
\vert \psi_R'(t)\vert =\vert \frac{1}{R^{\beta}}\rho '\left(\frac{t-T}{R^{\beta}}\right)\vert \leq \frac{C}{R^{\beta}}\mathbf{1}_{(T+R^{\beta},T+2R^{\beta})}(t),
$$
so that
\begin{eqnarray}
\vert I_2\vert &\leq& \frac{C}{R^{\beta}}\int _{T+R^{\beta}}^{T+2R^{\beta}}\int_{\vert x\vert \leq 2R/\ep }   u(t,x)\label{footstep}\\
&\leq &  \frac{C}{R^{\beta}}\left(\int _{T+R^{\beta}}^{T+2R^{\beta}}\int_{\vert x\vert \leq 2R/\ep } 1\right) ^{\frac{p}{p+1}}\left(\int _{T+R^{\beta}}^{T+2R^{\beta}}\int_{\vert x\vert \leq 2R/\ep } u^{1+p}(t,x)\right)^{\frac{1}{p+1}}\nonumber\\
&=& \frac{C}{R^{\beta}} \left(
R^{\beta}
\left(
\frac{2R}{\ep}
\right)^{N}
\right)^{\frac{p}{p+1}}\left(\int _{T+R^{\beta}}^{T+2R^{\beta}}\int_{\vert x\vert \leq 2R/\ep } u^{1+p}(t,x)\right)^{\frac{1}{p+1}}\nonumber\\
&=&\frac C{\ep^{\frac{Np}{p+1}}} \left(\int _{T+R^{\beta}}^{T+2R^{\beta}}\int_{\vert x\vert \leq 2R/\ep } u^{1+p}(t,x)\right)^{\frac{1}{p+1}},\nonumber
\end{eqnarray}
where we have used the H\"older inequality and equality $\beta=Np$. In view of \eqref{int-double}, the last integral above tends to zero as $R\to \infty$, and so does $I_2$.

Let us deal with $I_1=I_1(R)=\int_T^{T+2R^{\beta}}\int_{\R ^N} B(x)u(t,x)$, where
$$
B(x):=(J*\theta _R  -\theta _R)(x)=\int_{\R^N}\left(\theta_R( z-x)-\theta _R(x)\right)J(z)dz.
$$
First observe that if $\vert x\vert \geq 2R/\ep$ then $\theta_R(x)=0$ so that $B(x)\geq 0$. As a result
\begin{equation}
\label{I1prime}  
I_1\geq I_1':=\int_T^{T+2R^{\beta}}\int_{\vert x\vert <2R/\ep} B(x)u(t,x).
\end{equation}
In order to estimate $B(x)$, we use the 
 Plancherel formula and get
\begin{eqnarray*}
(2\pi)^{N}B(x)&=& (2\pi)^{N}\int_{\R^N}\theta_R( z-x)J(z)dz-(2\pi)^{N}\theta_R(x)\\
&=&\int _{\R ^N} \J(\xi)e^{-ix\cdot \xi}\widehat{\theta  _R}(\xi)\,d\xi-(2\pi)^{N}\theta_R(x)\\
&=&\int _{\R ^N} (1-\mathcal A(\xi)\vert \xi\vert ^{\beta})e^{-ix\cdot \xi}\widehat{\theta  _R}(\xi)\,d\xi-(2\pi)^{N}\theta_R(x),
\end{eqnarray*}
where function $\mathcal A$ is bounded in view of \eqref{J-Fourier-ass}. Since $\int _{\R ^N} e^{-ix\cdot \xi}\widehat{\theta  _R}(\xi)\,d\xi=\mathcal F(\mathcal F(\theta _R))(x)=(2\pi)^{N}\theta_R(x)$ and $\widehat{\theta_R}(\xi)=\left(\frac R\ep \right)^{N}\widehat \rho (\frac R\ep \xi)$, we get
$$
(2\pi)^{N}B(x)=-\left(\frac{\ep}{R}\right)^{\beta}\int_{\R ^N} \mathcal A\left(\frac{\ep}{R}\xi'\right)\vert\xi'\vert ^{\beta}e^{-i\frac{\ep}{R}x\cdot\xi'}\widehat \rho (\xi')\,d\xi',
$$
so that $$
\vert B(x)\vert \leq \frac 1{(2\pi)^{N}} \left(\frac{\ep}{R}\right)^{\beta}\Vert \mathcal A\Vert _\infty \int_{\R ^{N}} \vert\xi'\vert ^{\beta}\vert \widehat \rho (\xi')\vert\,d\xi'=C\frac{\ep^{\beta}}{R^{\beta}},
$$
 since $\widehat \rho \in \mathcal S(\R ^N)$. As a result
 $$
 \vert I_{1}'\vert \leq C \frac{\ep^{\beta}}{R^{\beta}}\int _T^{T+2R^{\beta}}\int_{\vert x\vert <2R/\ep}u(t,x).
 $$
We are now in the footsteps of \eqref{footstep} so that --- notice the presence of the crucial  multiplicative factor $\ep ^{\beta}$---  similar arguments (H\"older inequality and $\beta=Np$) yield
\begin{equation}
\label{I1prime-bis}
\vert I_{1}'\vert \leq  C{\ep^{\frac{\beta p}{p+1}}} \left(\int _{T}^{T+2R^{\beta}}\int_{\vert x\vert \leq 2 R/\ep } u^{1+p}(t,x)\right)^{\frac{1}{p+1}}.
\end{equation}

To conclude, plugging \eqref{I1prime} and \eqref{I1prime-bis} into \eqref{ineg}, we get
$$
\int _T^{\infty}\int_{\R^{N}} u^{1+p}(t,x)\theta _R(x)\psi_R(t)\leq \vert I_2\vert +C{\ep^{\frac{\beta p}{p+1}}} \left(\int _{T}^{T+2R^{\beta}}\int_{\vert x\vert \leq 2 R/\ep } u^{1+p}(t,x)\right)^{\frac{1}{p+1}}.
$$
Letting $R\to \infty$ yields
$$
\int _T ^{\infty} \int _{\R^{N}} u^{1+p}\leq C\ep^{\frac{\beta p}{p+1}} \left(\int _{T}^{\infty}\int_{\R ^{N}} u^{1+p}\right)^{\frac{1}{p+1}}.
$$
From the arbitrariness of $\ep >0$ and $T>0$ we deduce that $u\equiv 0$ on $(0,\infty)\times \R^{N}$, which is a contradiction. This concludes the proof of Theorem \ref{th:systematic}.
%We distinguish three regions, writing $I_1:=I_{11}+I_{12}+I_{13}$ where
%\begin{eqnarray*}
%I_{11}&:=& \int _T^{T+2R^{\beta}}\int_{\vert x\vert <R/\ep}B(x)u(t,x)\\
%I_{12}&:=& \int _T^{T+2R^{\beta}}\int_{R/\ep<\vert x\vert <2R/\ep}B(x)u(t,x)\\
%I_{13}&:=& \int _T^{T+2R^{\beta}}\int_{\vert x\vert >2R/\ep}B(x)u(t,x),\\
%\end{eqnarray*}
%with
%{\bf The third region.} Assume $\vert x\vert \geq 2R/\ep$, so that $\theta _R(x)=0$ %and therefore $B(x)\geq 0$. Hence 
%\begin{equation}
%\label{I13}
%I_{13}\geq 0,
%\end{equation}
% which will be enough for our purpose.\\
%{\bf The first  region.} Assume $\vert x\vert \leq R/\ep$, so that $\theta _R(x)=1$.
% \\
%{\bf The second region.} Assume $R/\ep \leq \vert x\vert \leq 2R/\ep$. Since $\rho \in C_c ^{\infty}(\R)$, both $\sup_{x\neq y} \frac{\vert \rho(x)-\rho(y)\vert }{\vert y-x\vert^{\beta}}$ and $\sup_{x\neq y} \frac{\vert \rho'(x)-\rho'(y)\vert}{\vert y-x\vert^{\beta}}$ are finite H\"older coefficients.
%\fbox{ICI}
% where we have used the H\"older inequality. In view of \eqref{int-double}, the last integral above tends to zero as $R\to \infty$, and so does $I_1$.
 \end{proof}

%\bigskip
%\begin{lem}\label{lem:beta-moment} Let Assumption \ref{ass:J} hold. Then there is %$C>0$ such that
%$$
%\int _{\frac R2\leq \vert x\vert \leq R} \vert x\vert ^{\beta}J(x)dx\leq C, \quad \text{ %for any } R>0.
%$$
%\end{lem}

%\begin{proof} Notice that assumption \ref{ass:J} does not imply that $\vert \cdot\vert ^{\beta}J$ belong to $L^{1}$ as easily seen with, for example, the Cauchy kernel...
%\end{proof}

\section{Blow up vs extinction}\label{s:extinction}

In this section, we prove that when $p>p_F=\frac \beta N$, depending on the size of the initial data, the solution to the Cauchy problem \eqref{eq} can be global and extincting, or blowing up in finite time, as stated in Theorem \ref{th:vs}.

\subsection{Extinction for small initial data}\label{ss:extinction}

\begin{proof}
[Proof of Theorem \ref{th:vs} $(i)$] The proof, as that in  \cite{Gar-Qui-10}, relies strongly on \cite{Cha-Cha-Ros-06} which provides the rate of decrease of the $L^{\infty}$ norm of the solution of the nonlocal linear equation $\partial _t v=J*v-v$.

\begin{lem}[See Theorem 1 in \cite{Cha-Cha-Ros-06} and Theorem 5 in \cite{Gar-Qui-10}]\label{lem:decrease} There is $C>0$ such that, for any initial data $v_0\in L^1(\R ^N)$ such that $\widehat{v_0} \in L^1(\R ^N)$, the solution of the Cauchy problem $\partial _t v=J*v-v$ satisfies
$$
\Vert v(t,\cdot)\Vert _{L^\infty}\leq \frac{C(\Vert v_0\Vert _{L^1}+\Vert \widehat{v_0}\Vert _{L^1})}{(1+t)^{N/\beta}}, \quad \text{ for any } t\geq 0.
$$
\end{lem}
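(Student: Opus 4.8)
The plan is to reproduce the Fourier-analytic argument behind \cite[Theorem 1]{Cha-Cha-Ros-06}. First I would note that the solution $v(t,x)=K(t,\cdot)*v_0(x)$ has Fourier transform $\widehat v(t,\xi)=\E\,\widehat{v_0}(\xi)$ --- this follows from $\widehat{J^{*(k)}}=\widehat J^{\,k}$ and the series defining $K(t,\cdot)$, exactly as in the proof of \eqref{def:f-dual}. Since $J$ is real, radial and of unit mass, $\J$ is real valued with $\J(\xi)\le 1$, so $0<\E\le 1$; as $\widehat{v_0}\in L^1$ this makes $\widehat v(t,\cdot)\in L^1$, and Fourier inversion gives
$$
\Vert v(t,\cdot)\Vert_{L^\infty}\le \frac{1}{(2\pi)^N}\int_{\R^N}\E\,\vert\widehat{v_0}(\xi)\vert\,d\xi .
$$

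For bounded times there is nothing to do: when $0\le t\le 1$, the inequality $\E\le 1$ already gives $\Vert v(t,\cdot)\Vert_{L^\infty}\le (2\pi)^{-N}\Vert\widehat{v_0}\Vert_{L^1}$, and $(1+t)^{N/\beta}$ is bounded on $[0,1]$. For $t\ge 1$ I would split the integral at $\vert\xi\vert=\xi_0$, where $\xi_0>0$ is chosen as in the proof of Lemma \ref{lem:f-below}, but small enough that \eqref{J-Fourier-ass} forces $\J(\xi)-1\le -\tfrac A2\vert\xi\vert^\beta$ on $\{\vert\xi\vert\le\xi_0\}$. On that ball I bound $\vert\widehat{v_0}(\xi)\vert\le\Vert v_0\Vert_{L^1}$ and rescale $z=t^{1/\beta}\xi$, obtaining a contribution at most
$$
\Vert v_0\Vert_{L^1}\int_{\R^N}e^{-\frac A2 t\vert\xi\vert^\beta}\,d\xi=\frac{\Vert v_0\Vert_{L^1}}{t^{N/\beta}}\int_{\R^N}e^{-\frac A2\vert z\vert^\beta}\,dz\le \frac{C\Vert v_0\Vert_{L^1}}{t^{N/\beta}}.
$$
On the complement $\{\vert\xi\vert\ge\xi_0\}$ I would use a uniform spectral gap $\J(\xi)-1\le-\delta$, which follows from continuity of $\J$, the strict inequality $\J(\xi)-1<0$ for $\xi\ne 0$ (already invoked in the proof of Lemma \ref{lem:f-below}), and $\J(\xi)\to 0$ as $\vert\xi\vert\to\infty$ by Riemann--Lebesgue; this gives a contribution at most $e^{-\delta t}\Vert\widehat{v_0}\Vert_{L^1}$, which for $t\ge1$ is $\le C\Vert\widehat{v_0}\Vert_{L^1}t^{-N/\beta}$ since $e^{-\delta t}t^{N/\beta}$ is bounded on $[1,\infty)$. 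Adding the two pieces and using $t^{-N/\beta}\le 2^{N/\beta}(1+t)^{-N/\beta}$ on $[1,\infty)$ closes the estimate.

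The only point requiring any care is the uniform gap $\sup_{\vert\xi\vert\ge\xi_0}\bigl(\J(\xi)-1\bigr)<0$: it relies on $\J$ reaching the value $1$ only at the origin --- true because the $L^1$ density $J$ is not carried by a proper closed subgroup --- together with Riemann--Lebesgue decay at infinity, and this is precisely the ingredient already used in the proof of Lemma \ref{lem:f-below}. Everything else is elementary, and one could of course simply cite \cite[Theorem 1]{Cha-Cha-Ros-06}; the slightly nonstandard right-hand side $\Vert v_0\Vert_{L^1}+\Vert\widehat{v_0}\Vert_{L^1}$ is exactly what the low/high frequency splitting produces.
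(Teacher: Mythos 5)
Your proof is correct; the paper itself offers no argument for this lemma but defers to \cite[Theorem 1]{Cha-Cha-Ros-06}, whose proof is exactly the low/high frequency splitting you reproduce, with the low frequencies handled via \eqref{J-Fourier-ass} and the scaling $z=t^{1/\beta}\xi$, and the high frequencies via the spectral gap $\sup_{\vert\xi\vert\ge\xi_0}(\J(\xi)-1)<0$ that the paper also invokes in the proof of Lemma \ref{lem:f-below}. Your justification of that gap (absolute continuity of $J$ ruling out $\J(\xi)=1$ for $\xi\neq 0$, plus Riemann--Lebesgue) is sound, and the right-hand side $\Vert v_0\Vert_{L^1}+\Vert\widehat{v_0}\Vert_{L^1}$ indeed emerges exactly as you describe.
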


We look after a supersolution to \eqref{eq} in the form $g(t)v(t,x)$, where $g(t)>0$ is to be determined (with $g(0)=1$) and $v(t,x)$ is the solution of $\partial _t v=J*v-v$ with $u_0$ as initial data. A straighforward computation shows that it is enough to have $\frac{g'(t)}{g^{1+p}(t)}\geq \Vert v(t,\cdot)\Vert _{L^\infty}^{p}$. By the above lemma, it is therefore enough to have
$$
\frac{g'(t)}{g^{1+p}(t)}=\frac{C^{p}(\Vert u_0\Vert _{L^1}+\Vert \uzero\Vert _{L^1}) ^{p}}{(1+t)^{pN/\beta}}, \quad g(0)=1.
$$
If $\Vert u_0\Vert _{L^1}+\Vert \uzero\Vert _{L^1}<\delta:=\frac 1C\left(\frac{\frac{pN}{\beta}-1}{p}\right)^{1/p}$ (notice that $\frac{pN}{\beta}-1>0$) then the solution of the above Cauchy problem 
$$
g(t)=\frac{1}
{\left(
1-\frac{pC^{p}(\Vert u_0\Vert _{L^1}+\Vert \uzero\Vert _{L^1})^{p}}
{\frac{pN}{\beta}-1}
\left(1-
\frac{1}{(1+t)^{\frac{pN}{\beta}-1}}
\right)
\right)^{1/p}},
$$
exists for all $t\geq 0$ and is decreasing. It therefore follows from the comparison principle that $u(t,x)\leq g(t)v(t,x)\leq v(t,x)$ so that the solution $u(t,x)$ of \eqref{eq} is global in time and, in view of Lemma \ref{lem:decrease}, satisfies estimate \eqref{extinction}. This concludes the proof of Theorem \ref{th:vs} $(i)$.
\end{proof}

\subsection{Blow up for large initial data}\label{ss:large}

\begin{proof}
[Proof of Theorem \ref{th:vs} $(ii)$] Let $\lambda >0$ and $R>0$ be given such that \eqref{lambda-R} holds. Now, let us consider the solution $u(t,x)$ to \eqref{eq} with initial data $u_0=\lambda \mathbf 1_{\{\vert x\vert \leq R\}}$ and prove the blow up of the \lq\lq localized mass''
\begin{equation}
\label{def:masse}
m(t):=\int_{\vert x\vert \leq R}u(t,x)dx,
\end{equation}
which is enough to prove the blow up of the solution.

Integrating equation \eqref{eq}, we get
\begin{equation}\label{edo}
\frac{d}{dt}m(t)=\int _{\vert x\vert \leq R} J*u(t,\cdot)(x) \, dx-m(t)+\int_{\vert x\vert \leq R}  u ^{1+p}(t,x)\, dx.
\end{equation}
Denoting $B_N$ the volume of the unit ball in $\R^{N}$, we estimate the last term in the above right hand side member by
\begin{equation}\label{esti1}
\int_{\vert x\vert \leq R}u ^{1+p}(t,\cdot)=B_N R^{N}\int_{\vert x\vert \leq R}\frac{1}{B_N R^{N}}u^{1+p}(t,\cdot)\geq \frac{1}{B_N^{p}R^{Np}}m^{1+p}(t),
\end{equation}
thanks to the Jensen inequality. Let us now turn to the first term in the right hand side member of \eqref{edo}. Using Fubini theorem yields
\begin{eqnarray*}
\int_{\vert x\vert \leq R} J*u(t,\cdot)&=&\int_{\R ^{N}}u(t,y) \int _{\vert x\vert \leq R} J(x-y)\, dx dy\\
&\geq & \int_{\vert y\vert \leq R }u(t,y) \int _{\vert z-y \vert \leq R} J(z)\, dz dy.
\end{eqnarray*}
Now we claim that, for any $y$ such that $0<\vert y\vert < R$,
\begin{equation}
\label{claim-boule}
\int _{\vert z-y \vert \leq R} J(z)\, dz\geq C_N \int _{\vert z\vert \leq R} J(z)\, dz,
\end{equation}
where $0<C_N<1$ is a constant that depends only on the dimension $N$. We postpone the proof of \eqref{claim-boule} and obtain
\begin{equation}
\label{esti2}
\int_{\vert x\vert \leq R} J*u(t,\cdot)\geq m(t)C_N \int _{\vert z\vert \leq R} J(z)\, dz.
\end{equation}
Then, plugging \eqref{esti1} and \eqref{esti2} in \eqref{edo}, we arrive at the differential inequality
$$
\frac{d}{dt}m(t)\geq m(t)\left[\frac{m^{p}(t)}{B_N^{p}R^{Np}}-\left(1-C_N \int_{\vert z\vert \leq R} J(z)\, dz\right)
\right].
$$
Since $m(0)=\int _{\vert x\vert\leq R}u_0=\lambda B_N R^{N}>\left(1-C_N\int_{\vert z\vert \leq R} J(z)\, dz\right)^{1/p}B_N R^{N}$ thanks to \eqref{lambda-R}, the above differential inequality 
enforces\footnote{Indeed, the Bernouilli equation $ \dot{x}=ax^{1+p}-bx$, $a>0$, $b>0$, can be solved explicitly and blows up in finite time as soon as $x(0)>\left(\frac b a\right)^{1/p}$.} the blow up of $m(t)$ in finite time.  This concludes the proof of Theorem \ref{th:vs} $(ii)$.
\end{proof}

For the convenience of the reader, and also to give the exact value of the constant $C_N$, we prove below the rather intuitive claim \eqref{claim-boule}.

\begin{proof}
[Proof of claim \eqref{claim-boule}] In dimension $N=1$, \eqref{claim-boule} clearly holds true with $C_1=\frac 12$ since $J$ is even. Let us now assume $N\geq 2$. We denote by $S_{N-1}$ the unit hypersphere of $\R ^N$. Since $J$ is radial we have
\begin{equation}
\label{radiality}
\int _{\vert z\vert \leq R}J(z)\,dz=\vert S_{N-1}\vert \int _0^R r^{N-1}J(r)\,dr,
\end{equation}
where we recall that $\vert S_{N-1}\vert =\frac{2\pi^{N/2}}{\Gamma(N/2)}$. Let us take $y$ such that $0<r_0:=\vert y\vert <R$. Define $(e_1:=\frac{y}{r_0}, e_2,\cdots,e_N)$ an orthonormal basis of $\R ^N$. For a generic point $z\in \R ^N$ we denote by $(z_1,\cdots,z_N)$ its cartesian coordinates in $(e_1,\cdots,e_N)$ and $(r,\theta _1,\cdots,\theta _{N-1})\in [0,\infty)\times [-\frac \pi 2, \frac \pi 2[^{N-2}\times [-\pi,\pi)$ its polar coordinates, which are related through
\begin{eqnarray*}
z_1&=&r\cos \theta _1\cos \theta _2...\cos \theta _{N-2}\cos \theta _{N-1}\\
z_2&=&r\cos \theta _1\cos \theta _2...\cos \theta _{N-2}\sin \theta _{N-1}\\
z_3&=&r\cos \theta _1\cos \theta _2...\sin \theta _{N-2}\\
...&&\\
z_{N-1}&=&r\cos\theta _1\sin \theta _2\\
z_N&=&r\sin\theta _1.
\end{eqnarray*}
We claim that
\begin{equation}
\label{camenbert}
D:=\left\{z: 0<r<R, \vert \theta _i\vert < \theta ^*:= \arccos \frac{1}{2^{\frac1{N-1}}} \right\}\subset \left\{z: \vert z-y\vert< R\right\}.
\end{equation}
Indeed, for $z\in D$, we have
\begin{eqnarray*}
\vert z-y\vert ^{2}&=&(z_1-r_0)^2+z_2^2+\cdots+z_N^2=r^2-2rr_0\cos \theta _1...\cos \theta _{N-1}+r_0^2\\
&\leq &r^2-2rr_0\cos ^{N-1}\theta ^*+r_0^2=r^2-rr_0+r_0^2\leq \max (r^2,r_0^2)< R^2.
\end{eqnarray*}
It therefore follows from \eqref{camenbert} that
\begin{eqnarray*}
\int _{\vert z-y\vert \leq R}J(z)dz&\geq& \int_D J(z)dz\\
&=& \int _{-\theta ^*}^{\theta ^*} (\cos \theta _1)^{N-2}d\theta _1  \int _{-\theta ^*}^{\theta ^*} (\cos \theta _2)^{N-3}d\theta _2... \int _{-\theta ^{*}}^{\theta ^{*} }d\theta _{N-1}\int _0 ^R r^{N-1}J(r)dr\\
&= & C_N \int _{\vert z\vert \leq R} J(z)dz,
\end{eqnarray*}
in view of \eqref{radiality} and where
\begin{eqnarray*}
C_N:&=&\frac{\int _{-\theta ^*}^{\theta ^*} (\cos \theta _1)^{N-2}d\theta _1  \int _{-\theta ^*}^{\theta ^*} (\cos \theta _2)^{N-3}d\theta _2... \int _{-\theta ^{*}}^{\theta ^{*} }d\theta _{N-1}}{\vert  S_{N-1}\vert}\\
&=&\frac{\int _{-\theta ^*}^{\theta ^*} (\cos \theta _1)^{N-2}d\theta _1  \int _{-\theta ^*}^{\theta ^*} (\cos \theta _2)^{N-3}d\theta _2... \int _{-\theta ^{*}}^{\theta ^{*} }d\theta _{N-1}}{\int _{-\pi/2}^{\pi/2} (\cos \theta _1)^{N-2}d\theta _1  \int _{-\pi/2}^{\pi/2} (\cos \theta _2)^{N-3}d\theta _2... \int _{-\pi}^{\pi }d\theta _{N-1}}\in(0,1),
\end{eqnarray*}
which concludes the proof of \eqref{claim-boule}.
\end{proof}

\section{Hair trigger effect}\label{s:hair-trigger}

\subsection{Hair trigger effect along a subsequence}\label{ss:hair-subsequence}

Following the strategy of \cite[Theorem 18.7]{Qui-Sou-book}, we prove here the hair trigger effect along a subsequence of time.

\begin{proof}[Proof of Corollary \ref{cor:hair} $(i)$] First, let $v_0\in C(\R ^N, [0,1])$ be such that $v_0(x_0+\cdot)$ is radial nonincreasing, for some $x_0\in \R^N$. Let $v(t,x)$ be the global solution of \eqref{eq2} with $v_0$ as initial data. Then, $J$ being radial, $v(t,x_0+\cdot)$ remains radial nonincreasing for later times  $t>0$. Let us prove that 
\begin{equation}
\label{step1}
\limsup _{t\to\infty} v(t,x_0)=1.
\end{equation}
Assume by contradiction that there are $0<\ep<1$ and $T>0$ such that $v(t,x_0)\leq 1-\ep$ for all $t\geq T$, which in turn implies $v(t,x)\leq 1-\ep$, for all $(t,x)\in[T,\infty)\times \R ^N$. As a result
\begin{equation*}
\partial _t v\geq J*v-v+\ep v^{1+p} \quad \text{ in } (T,\infty)\times \R ^N.
\end{equation*}
Hence $w:=\ep ^{1/p}v$ satisfies $\partial _t w\geq J*w-w+w^{1+p}$ in $(T,\infty)\times\R^N$. Since $0<p\leq p_F=\frac \beta N$, it follows from Theorem \ref{th:systematic} and the comparison principle that $w$ is non global, which is a contradiction.

Now, let $u_0:\R^N\to [0,1]$ be as in Corollary \ref{cor:hair} $(i)$, that is continuous and nontrivial. We need to prove
\begin{equation}
\label{hair-limsup}
\limsup _{t\to \infty} \inf _{\vert x\vert \leq R}  u(t,x)=1, \quad \text{ for any } R\geq 0.
\end{equation}
By a time shift if necessary, we can assume further that $u_0>0$. Therefore $u_0$ dominates some $\tilde u _0:\R ^N\to [0,1]$ which is nontrivial and radial nonincreasing. Hence, by comparison, it suffices to prove \eqref{hair-limsup} for $\tilde u(t,x)$ the solution of \eqref{eq2} with $\tilde u_0$ as initial data, for which we can take advantage of the fact that $\tilde u(t,\cdot)$ is radial nonincreasing for later times $t>0$. Again, by a time shift if necessary, we can assume further that $\tilde u _0>0$. Now, for a given $x_0\in \R^N$, $\tilde u_0$ dominates some $v_0\in C(\R ^N,[0,1])$ such that $v_0(x_0+\cdot)$ is radial nonincreasing. It follows from \eqref{step1} and the comparison principle that
$$
\limsup _{t\to \infty} \tilde u (t,x_0)=1.
$$
Since $x_0$ is arbitrary and since $\tilde u(t,\cdot)$ is radial nonincreasing, this implies
$$
\limsup _{t\to \infty} \inf _{\vert x\vert \leq R} \tilde u(t,x)=1, \quad \text{ for any } R\geq 0.
$$
This concludes the proof of \eqref{hair-limsup}. 
\end{proof}

\subsection{Actual hair trigger effect}\label{ss:hair}

In this subsection, we prove the actual hair trigger effect  as stated in Theorem \ref{th:hair}. This requires the combination of an elaborate subsolution involving two different time scales --- see \cite{Zla-05} for a related argument in a local case--- and the following asymptotics for the solution to the linear nonlocal diffusion equation. 

\begin{lem}[The linear equation from below]
\label{lem:pardessous}
Let Assumption \ref{ass:J} hold. For a given $R>0$, let $\varphi(t,x)$ be the solution of $\partial _t \varphi =J*\varphi -\varphi$ with initial data $\varphi _0\equiv \mathbf 1 _{B_R}$. Then there are $\gamma >0$ and $m>0$ such that
$$
\varphi(t,x)\geq  \frac {\gamma}{t^{N/\beta}} \mathbf{1}_{B_{mt^{1/\beta}}}(x), 
$$
for $t>0$ large enough and $x\in\R ^N$.
\end{lem}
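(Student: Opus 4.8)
\textbf{Proof strategy for Lemma \ref{lem:pardessous}.}

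The plan is to obtain the lower bound through a Fourier-analytic argument analogous to (but dual to) the one used in Lemma \ref{lem:f-below}, applied pointwise rather than to an integral. Recall that $\varphi(t,x)=K(t,\cdot)*\mathbf 1_{B_R}(x)=e^{-t}\mathbf 1_{B_R}(x)+\psi(t,\cdot)*\mathbf 1_{B_R}(x)$, and that its Fourier transform is $\widehat\varphi(t,\xi)=e^{t(\J(\xi)-1)}\widehat{\mathbf 1_{B_R}}(\xi)$. By Fourier inversion,
$$
\varphi(t,x)=\frac{1}{(2\pi)^N}\int_{\R^N}e^{ix\cdot\xi}e^{t(\J(\xi)-1)}\widehat{\mathbf 1_{B_R}}(\xi)\,d\xi.
$$
First I would split the integral at $|\xi|=\xi_0$, where $\xi_0>0$ is chosen (as in the proof of Lemma \ref{lem:f-below}, using \eqref{J-Fourier-ass}) so that $\J(\xi)-1\le -2A|\xi|^\beta$ on $|\xi|\le\xi_0$ and $\J(\xi)-1\le-\delta<0$ on $|\xi|\ge\xi_0$; one should also shrink $\xi_0$ so that $\widehat{\mathbf 1_{B_R}}(\xi)\ge \frac12\widehat{\mathbf 1_{B_R}}(0)=\frac12 |B_R|>0$ there, which is possible since $\widehat{\mathbf 1_{B_R}}$ is real, continuous (indeed radial), and positive at $0$. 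The high-frequency part is bounded in absolute value by $e^{-\delta t}\|\widehat{\mathbf 1_{B_R}}\|_{L^1}$, which is exponentially small, hence negligible compared to the claimed $t^{-N/\beta}$ rate.

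For the low-frequency part, the key is the change of variables $\xi=z/t^{1/\beta}$, which gives
$$
\int_{|\xi|\le\xi_0}e^{ix\cdot\xi}e^{t(\J(\xi)-1)}\widehat{\mathbf 1_{B_R}}(\xi)\,d\xi
=\frac{1}{t^{N/\beta}}\int_{|z|\le \xi_0 t^{1/\beta}}e^{i\frac{x\cdot z}{t^{1/\beta}}}\,e^{t(\J(z/t^{1/\beta})-1)}\,\widehat{\mathbf 1_{B_R}}\!\left(\tfrac{z}{t^{1/\beta}}\right)dz.
$$
Now restrict attention to $x\in B_{mt^{1/\beta}}$, i.e. $|x|\le m t^{1/\beta}$, so that $|x\cdot z/t^{1/\beta}|\le m|z|$. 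Writing $e^{i x\cdot z/t^{1/\beta}}=\cos(x\cdot z/t^{1/\beta})+i\sin(\cdot)$ and noting that the whole integral is real (by radial symmetry, the imaginary part vanishes; alternatively take real parts throughout), one needs the real part of the integrand to be bounded below. Using $\cos\theta\ge 1-\theta^2/2\ge 1/2$ for $|\theta|\le 1$, the cosine is $\ge 1/2$ on the sub-ball $|z|\le 1/m$ when $m$ is small; on that sub-ball one also has $t(\J(z/t^{1/\beta})-1)\to -A|z|^\beta$ uniformly and $\widehat{\mathbf 1_{B_R}}(z/t^{1/\beta})\to |B_R|$. So for $m>0$ small enough and $t$ large enough the integrand's real part is $\ge \frac14 |B_R| e^{-2A|z|^\beta}$ on $\{|z|\le 1/m\}$, and dropping the rest of the domain (after checking the discarded low-frequency tail $\{1/m<|z|\le\xi_0 t^{1/\beta}\}$ contributes a controlled, in fact bounded, amount — here one uses $t(\J(z/t^{1/\beta})-1)\le -2A|z|^\beta$ from \eqref{preszero}, so its contribution is dominated by $\int_{|z|>1/m}e^{-2A|z|^\beta}dz$, which can be made $\le$ a small fraction of $\int_{|z|\le 1/m}e^{-2A|z|^\beta}dz$ by enlarging... — careful, $m$ is already fixed; instead keep $1/m$ fixed and note this tail is simply $\le \|\widehat\rho\|$-type constant, finite, and combine) we conclude
$$
\varphi(t,x)\ge \frac{1}{(2\pi)^N t^{N/\beta}}\left(\frac{|B_R|}{4}\int_{|z|\le 1/m}e^{-2A|z|^\beta}dz\right)=:\frac{\gamma}{t^{N/\beta}}
$$
for $|x|\le m t^{1/\beta}$ and $t$ large, which is the claim with this $\gamma>0$ and $m>0$.

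The main obstacle is the interaction between the oscillatory factor $e^{ix\cdot z/t^{1/\beta}}$ and the size of the spatial region: one cannot afford $x$ to range over all of $\R^N$, and the bound $|x|\le m t^{1/\beta}$ with $m$ \emph{small} is exactly what keeps $\cos(x\cdot z/t^{1/\beta})$ bounded below on the relevant frequency ball. The delicate point is to choose the three parameters in the right order — first $\xi_0$ (from $\J$ and positivity of $\widehat{\mathbf 1_{B_R}}$), then $m$ small (to tame the oscillation on a fixed frequency sub-ball $\{|z|\le 1/m\}$ and to ensure the discarded-tail contribution stays a controlled fraction of the main term), and finally $t_0$ large (so the pointwise limits $e^{t(\J(z/t^{1/\beta})-1)}\to e^{-A|z|^\beta}$ and $\widehat{\mathbf 1_{B_R}}(z/t^{1/\beta})\to|B_R|$ kick in, and so $\xi_0 t^{1/\beta}\ge 1/m$) — and to keep track of uniformity in $x$ throughout, using dominated convergence with a domination independent of $x$ on $\{|x|\le m t^{1/\beta}\}$. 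None of the individual estimates is hard; the bookkeeping of the order of quantifiers is where care is required.
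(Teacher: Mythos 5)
Your proof takes a genuinely different route from the paper's: in the paper, Lemma \ref{lem:pardessous} is a two-line consequence of \cite[Corollary 2.1]{Cha-Cha-Ros-06}, which gives the uniform asymptotics $\sup_x\big|t^{N/\beta}\varphi(t,x)-\Vert\varphi_0\Vert_{L^1}G_A\!\left(x t^{-1/\beta}\right)\big|\to0$ with $G_A(y)=\frac{1}{(2\pi)^N}\int e^{iy\cdot\xi}e^{-A|\xi|^\beta}d\xi$, combined with the continuity and positivity of $G_A$ at the origin; you are essentially reproving this asymptotics by hand on the ball $|x|\le mt^{1/\beta}$. That is a legitimate, self-contained alternative in spirit, but as written it has two genuine gaps. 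First, your starting point --- an absolutely convergent Fourier inversion formula for $\varphi(t,x)$ --- is not available: $\widehat{\mathbf 1_{B_R}}$ decays only like $|\xi|^{-(N+1)/2}$, hence is \emph{not} in $L^1(\R^N)$ for any $N\ge1$, and $e^{t(\J(\xi)-1)}$ does not decay in $\xi$ (it tends to $e^{-t}$, reflecting the atom $e^{-t}\dirac$ of $K(t,\cdot)$ and the discontinuity of $\varphi(t,\cdot)$). Consequently your high-frequency bound ``$\le e^{-\delta t}\Vert\widehat{\mathbf 1_{B_R}}\Vert_{L^1}$'' is a bound by $+\infty$. The repair is to discard the nonnegative term $e^{-t}\mathbf 1_{B_R}$ and invert $\psi(t,\cdot)*\mathbf 1_{B_R}$ instead: its transform $\left(e^{t(\J-1)}-e^{-t}\right)\widehat{\mathbf 1_{B_R}}$ lies in $L^1$ because $|e^{t(\J-1)}-e^{-t}|\le t|\J|$ and $\J,\widehat{\mathbf 1_{B_R}}\in L^2$; on $|\xi|\le\xi_0$ the extra $-e^{-t}\widehat{\mathbf 1_{B_R}}$ contributes $O(e^{-t})$, and on $|\xi|\ge\xi_0$ the Cauchy--Schwarz inequality together with $\eta:=\sup_{|\xi|\ge\xi_0}|\J(\xi)|<1$ (same reasoning as for \eqref{loinzero}, using also $\J>-1$ off the origin) and $|e^{t(\J-1)}-e^{-t}|\le t|\J|e^{-(1-\eta)t}$ gives a contribution $O\!\left(te^{-(1-\eta)t}\right)$, indeed negligible against $t^{-N/\beta}$.

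Second, the discarded low-frequency annulus $\{1/m<|z|\le\xi_0t^{1/\beta}\}$ is never actually handled: your parenthetical trails off, and the inequality you invoke, \eqref{preszero}, goes the wrong way (it bounds $\J-1$ from \emph{below}, which is what you correctly use for the main term on $|z|\le1/m$). What the annulus requires is the complementary bound $\J(\xi)-1\le-\tfrac A2|\xi|^\beta$ for $|\xi|\le\xi_0$, which also follows from \eqref{J-Fourier-ass} after shrinking $\xi_0$; with it the annulus contributes at most $|B_R|\int_{|z|>1/m}e^{-\frac A2|z|^\beta}dz$ in absolute value. There is then no circularity in the choice of parameters: the oscillation bound $\cos\!\left(x\cdot z/t^{1/\beta}\right)\ge\cos 1>\tfrac12$ on $\{|z|\le1/m,\ |x|\le mt^{1/\beta}\}$ holds for \emph{every} $m$, the main term $\tfrac{|B_R|}{4}\int_{|z|\le1/m}e^{-2A|z|^\beta}dz$ is bounded below uniformly in small $m$, while the annulus tail above tends to $0$ as $m\to0$; so one fixes $\xi_0$ first, then $m$ small enough that the tail is at most half the main term, then $t_0$ large enough that $\xi_0t^{1/\beta}\ge1/m$ and the expansion \eqref{J-Fourier-ass} applies uniformly on $|z|\le1/m$. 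With these two repairs your argument closes and yields the lemma without citing \cite{Cha-Cha-Ros-06}, at the cost of rewriting carefully what the paper simply quotes.
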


\medskip

\begin{proof} This is a direct consequence of \cite{Cha-Cha-Ros-06}. Indeed in virtue of  \cite[Corollary 2.1]{Cha-Cha-Ros-06}, we have
\begin{eqnarray*}
t^{N/\beta}\varphi(t,x)&=&t^{N/\beta}\varphi\left(t,t^{1/\beta}\frac{x}{t^{1/\beta}}\right)-\Vert \varphi _0\Vert _{L^{1}} G_A\left(\frac{x}{t^{1/\beta}}\right)+
\Vert \varphi _0\Vert _{L^{1}} G_A\left(\frac{x}{t^{1/\beta}}\right)\\
&=& o(t)+
\Vert \varphi _0\Vert _{L^{1}} G_A\left(\frac{x}{t^{1/\beta}}\right),
\end{eqnarray*}
as $t\to \infty$, where
$$
G_A(y):=\frac{1}{(2\pi)^{N}}\int _{\R^{N}} e^{iy\cdot\xi}e^{-A\vert \xi \vert ^{\beta}}d\xi.
$$
Noticing that $G_A(y)\to \frac{1}{(2\pi)^{N}}\int _{\R^{N}} e^{-A\vert \xi \vert ^{\beta}}d\xi>0$ as $y\to 0$ enables to select $m>0$ small enough so that, for any $x\in B_{mt^{1/\beta}}$, we have $\Vert \varphi _0\Vert _{L^{1}} G_A\left(\frac{x}{t^{1/\beta}}\right)\geq 2\gamma $ for some $\gamma >0$. This concludes the proof of the lemma.
\end{proof}

\begin{proof}[Proof of Theorem \ref{th:hair}] Let $u_0:\R^N\to [0,1]$ be as in Theorem \ref{th:hair}, that is continuous and nontrivial. Let $\ep >0$ and $R>0$ be given. In view of the hair trigger effect along a subsequence \eqref{hair-limsup} and thanks to the comparison principle, it is enough to consider the solution $u(t,x)$ to \eqref{eq2} with the initial datum  $u_0\equiv (1-\ep)\mathbf 1 _{B_R}$.

Let $\varphi(t,x)$ denote the solution to $\partial _t \varphi=J*\varphi-\varphi$ with initial datum $u_0\equiv (1-\ep)\mathbf 1 _{B_R}$. Notice that, from the comparison principle, we get $\varphi\leq u$. 
From Lemma \ref{lem:pardessous} we deduce that there is $\tau _0>0$ such that 
$$
\varphi(\tau,x)\geq  (1-\ep) \frac{\gamma}{\tau^{N/\beta}}\mathbf 1 _{B_{m\tau ^{1/\beta}}}(x)=:\Phi _0(x), \quad\forall (\tau,x)\in (\tau _0,\infty)\times \R ^N.
$$
Let $\Phi(t,x)$ denote the solution to $\partial _t \Phi=J*\Phi -\Phi$ with initial datum $\Phi_0$. Let $U(t,x)$ denote the solution to $\partial _t U=J*U-U+U^{1+p}(1-U)$ with initial datum $\Phi _0$. Since $U(0,x)=\Phi_0(x)\leq \varphi (\tau,x)\leq u(\tau,x)$, the comparison principle yields
\begin{equation}
\label{U}
u(\tau +t,x)\geq U(t,x),\quad \forall (t,x)\in(0,\infty)\times \R ^N.
\end{equation}

Next, for $X>0$, let us define
$$
w(t,X):=\frac{1}{\left(X^{-p}-\ep pt\right)^{1/p}}=\frac{X}{(1-\ep pt X^{p})^{1/p}},\quad 0<t<\frac{1}{\ep p X^p},
$$
that is the solution to the Cauchy problem 
$$
\partial _t w(t,X) =\ep w^{1+p}(t,X), \quad w(0,X)=X.
$$
Notice that $\partial _Xw=\frac{w^{1+p}}{X^{1+p}}$ and $\partial _{XX} w=(1+p)\frac{w^{1+p}}{X^{2+2p}}(w^p-X^p)\geq 0$ so that $X\mapsto w(t,X)$ is convex.

\begin{lem}[A subsolution] Function
$$
W(t,x):=w(t,\Phi(t,x))
$$
is a subsolution to \eqref{eq2} on $(0,T)\times \R^{N}$, where time 
\begin{equation}
\label{def:T}
T=T(\tau):=\frac{1}{\ep p}\left(\frac{\tau^{pN/\beta}}{(1-\ep)^{p}\gamma ^p}-\frac{1}{(1-\ep)^p}
\right)
\end{equation}
is positive for any $\tau\geq \tau _0$, up to enlarging $\tau _0>0$ if necessary.
\end{lem}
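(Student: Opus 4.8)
The plan is to differentiate $W(t,x)=w(t,\Phi(t,x))$ in time and exploit that $\Phi$ solves the linear equation $\partial_t\Phi=J*\Phi-\Phi$ while $w$ solves the ODE $\partial_t w=\ep w^{1+p}$. By the chain rule — a direct computation, legitimate since $\Phi$ is of class $C^1$ in time and $w(t,\cdot)$ is smooth on its interval of definition — one gets, a.e. on $(0,T)\times\R^N$,
$$
\partial_t W=\partial_t w(t,\Phi)+\partial_X w(t,\Phi)\,\partial_t\Phi=\ep\,w^{1+p}(t,\Phi)+\partial_X w(t,\Phi)\bigl(J*\Phi-\Phi\bigr),
$$
and the goal is to bound the right-hand side above by $J*W-W+W^{1+p}(1-W)$.

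The convexity of $X\mapsto w(t,X)$, noticed just before the statement, does the work. Writing the tangent-line inequality $w(t,z)\geq w(t,\Phi(t,x))+\partial_X w(t,\Phi(t,x))\,(z-\Phi(t,x))$, taking $z=\Phi(t,x-y)$ and integrating against the probability density $J(y)\,dy$ (a Jensen inequality), one obtains
$$
J*W(t,\cdot)(x)\geq W(t,x)+\partial_X w(t,\Phi(t,x))\bigl(J*\Phi(t,\cdot)(x)-\Phi(t,x)\bigr),
$$
that is $\partial_X w(t,\Phi)\,(J*\Phi-\Phi)\leq J*W-W$. Plugging this into the expression for $\partial_t W$, it only remains to check $\ep\,w^{1+p}(t,\Phi)\leq W^{1+p}(1-W)$; since $W=w(t,\Phi)$, this is nothing but the pointwise bound $W(t,x)\leq 1-\ep$.

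So everything boils down to proving $W\leq 1-\ep$ on $(0,T)\times\R^N$. The constant $X_0:=(1-\ep)\gamma\tau^{-N/\beta}$ is a (super)solution of $\partial_t v=J*v-v$ and $\Phi_0\leq X_0$, so the comparison principle gives $0\leq\Phi(t,x)\leq X_0$ for all $t\geq0$; since $X\mapsto w(t,X)$ is nondecreasing, $W(t,x)\leq w(t,X_0)=(X_0^{-p}-\ep p t)^{-1/p}$. The elementary inequality $(X_0^{-p}-\ep p t)^{-1/p}\leq 1-\ep$ is equivalent to $t\leq\frac{1}{\ep p}\bigl(X_0^{-p}-(1-\ep)^{-p}\bigr)$, which, after inserting $X_0^{-p}=(1-\ep)^{-p}\gamma^{-p}\tau^{pN/\beta}$, is exactly $t\leq T(\tau)$. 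One also records that $T(\tau)<\frac{1}{\ep p X_0^p}$, so $w(t,\cdot)$ is well defined — and convex — at every value taken by $\Phi$ whenever $t<T(\tau)$, which retroactively justifies the Jensen step; and $T(\tau)>0$ precisely when $\tau^{pN/\beta}>\gamma^p$, i.e. $\tau>\gamma^{\beta/N}$, which holds after enlarging $\tau_0$. The only genuine care needed — rather than a real obstacle — is this bookkeeping of the domain of $w$, ensuring all the chain-rule and Jensen manipulations take place where $w$ is smooth and convex.
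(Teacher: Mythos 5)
Your proof is correct and follows essentially the same route as the paper: the chain rule together with $\partial_X w=\frac{w^{1+p}}{X^{1+p}}$, the convexity (tangent-line/Jensen) inequality giving $\partial_X w(t,\Phi)(J*\Phi-\Phi)\leq J*W-W$, and the bound $W\leq 1-\ep$ on $(0,T)$ coming from $\Phi\leq\Vert\Phi_0\Vert_{L^\infty}$ and the definition of $T$. Your extra bookkeeping (monotonicity of $w$, $T<\frac{1}{\ep p}\Vert\Phi_0\Vert_{L^\infty}^{-p}$, positivity of $T$ for $\tau>\gamma^{\beta/N}$) just makes explicit what the paper leaves implicit.
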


\begin{proof} Since $\Phi(t,x)\leq \Vert \Phi _0\Vert _{L^\infty}=\frac{(1-\ep)\gamma}{\tau ^{N/\beta}}$, we see that $W(t,x)\leq 1-\ep$ on $(0,T)\times \R ^N$. As a result
\begin{eqnarray*}
\partial _t W -(J*W-W)-W^{1+p}(1-W)&\leq &\partial _t W -(J*W-W)-\ep W^{1+p}\\&=& \frac{W^{1+p}}{\Phi ^{1+p}}(J*\Phi-\Phi)-(J*W-W),
\end{eqnarray*}
by a direct computation. On the other hand
\begin{eqnarray*}
(J*W-W)(t,x)&=&\int _{\R ^N}J(y)\left(w(t,\Phi(t,x-y))-w(t,\Phi(t,x))\right)dy\\
&\geq & \int _{\R ^N}J(y) (\Phi(t,x-y)-\Phi(t,x))\frac{w^{1+p}(t,\Phi(t,x))}{\Phi^{1+p}(t,x)}dy\\
&=&\frac{W^{1+p}(t,x)}{\Phi^{1+p}(t,x)}(J*\Phi-\Phi)(t,x),
\end{eqnarray*}
where we have used the convexity of $X\mapsto w(t,X)$. This concludes the proof of the lemma.
\end{proof}

Since $W(0,x)=\Phi_0(x)=U(0,x)$, the comparison principle yields
\begin{eqnarray*}
U(T,x)\geq W(T,x)&=&\left(\Phi(T,x)^{-p}-\ep pT\right)^{-1/p}\\
&=&\left(\Phi(T,x)^{-p}
-\frac{\tau^{pN/\beta}}{(1-\ep)^{p}\gamma ^p}+\frac{1}{(1-\ep)^p}\right)^{-1/p},
\end{eqnarray*}
by definition of time $T$. But, in view of Section \ref{s:basic}, one can write
\begin{eqnarray*}
\Phi (T,x)&=&K(T,\cdot)*\Phi _0 (x)=\frac{(1-\ep)\gamma}{\tau^{N/\beta}}K(T,\cdot)*\mathbf 1 _{B_{m\tau ^{1/\beta}}}(x)\\
&=&\frac{(1-\ep)\gamma}{\tau^{N/\beta}}\left( e^{-T} \mathbf 1 _{B_{m\tau ^{1/\beta}}}(x)+\psi(T,\cdot)*\mathbf 1 _{B_{m\tau ^{1/\beta}}}(x)\right)\\
&=&\frac{(1-\ep)\gamma}{\tau^{N/\beta}}\left(1-e ^{-T}(1-\mathbf 1 _{B_{m\tau ^{1/\beta}}}(x))-\int _{\vert y\vert\geq m\tau ^{1/\beta}}\psi(T,x-y)dy \right),
\end{eqnarray*}
using \eqref{mass-presque-1}. From  now, we restrict ourselves to $x\in B_R$. Hence, up to enlarging $\tau _0>0$ if necessary, we can get rid of the term  $1-\mathbf 1 _{B_{m\tau ^{1/\beta}}}(x)$ for any $\tau \geq \tau _0$ and get
$$
\Phi(T,x)^{-p}=\frac{\tau^{pN/\beta}}{(1-\ep)^{p}\gamma^{p}}\left(1-\int _{\vert y\vert\geq m\tau ^{1/\beta}}\psi(T,x-y)dy \right)^{-p},
$$
so that
\begin{equation}
\label{qqch2}
U(T,x)\geq \left(\frac{1}{(1-\ep)^{p}\gamma ^{p}}\tau ^{pN/\beta}\left[\monstre\right] +\frac{1}{(1-\ep)^{p}}
 \right)^{-1/p}.
 \end{equation}
 
 We now need to estimate $\int _{\vert y\vert\geq m\tau ^{1/\beta}}\psi(T,x-y)dy$. Again, up to enlarging $\tau _0>0$, we have for any $\tau \geq \tau _0$
 \begin{equation}\label{qqch}
 \int _{\vert y\vert \geq m\tau ^{1/\beta}} \psi(T,x-y)dy\leq \int _{\vert z\vert \geq \frac m 2 \tau ^{1/\beta}}\psi (T,z)dz=e^{-T}\sum _{k=1} ^\infty \frac{T^{k}}{k!} \int _{\vert z\vert \geq \frac m 2 \tau ^{1/\beta}}J^{*(k)}(z)dz.
 \end{equation}
 We use the fact that the decay of the kernel $J$ is associated to the behavior of $\widehat J$ near zero. Precisely, quoting \cite[Chapter 2, subsection 2.3.c, (3.5)]{Dur-96}, we get a constant $C>0$ such that, for all $k\geq 1$,
 \begin{eqnarray*}
 \int _{\vert z\vert \geq \frac m 2 \tau ^{1/\beta}}J^{*(k)}(z)dz &\leq & C\tau ^{N/\beta}\int_{\vert \xi\vert \leq \frac 1{C\tau ^{1/\beta}}} (1-\widehat{J^{*(k)}}(\xi))d\xi\\
 &=& C\tau ^{N/\beta}\int_{\vert \xi\vert \leq \frac 1{C\tau ^{1/\beta}}} (1-{\widehat J}\, ^{k}(\xi))d\xi\\
 &\leq & C\tau ^{N/\beta}\int_{\vert \xi\vert \leq \frac 1{C\tau ^{1/\beta}}}k (1-\widehat J(\xi))d\xi.
 \end{eqnarray*}
 Since $1-\widehat J(\xi) \sim A\vert \xi \vert ^\beta$ as $\xi \to 0$, it follows  that, up to enlarging $\tau _0>0$, we have  for all $\tau \geq \tau _0$ and all $k\geq 1$
$$
 \int _{\vert z\vert \geq \frac m 2 \tau ^{1/\beta}}J^{*(k)}(z)dz 
 \leq   k C2A \tau ^{N/\beta}\int _{\vert \xi\vert \leq \frac 1{C\tau ^{1/\beta}}}\vert \xi \vert ^{\beta}d\xi \leq k\frac{C'}{\tau},
 $$
 for some $C'>0$. Plugging this into \eqref{qqch} we get 
 \begin{equation}
 \label{etoile-m}
 \int _{\vert y\vert \geq m\tau ^{1/\beta}} \psi(T,x-y)dy\leq e^{-T}\sum _{k=1}^{+\infty} \frac{T^{k}}{k!}k\frac{C'}{\tau}=C'\frac T \tau.
 \end{equation}
 
% It therefore follows from \eqref{qqch2} that 
 %$$
% U(T,x)\geq \left(\frac{pC'}{(1-\ep)^{p}\gamma ^{p}}\tau ^{pN/\beta} \frac T \tau+\frac{1}{(1-\ep)^{p}}
% \right)^{-1/p}.
 %$$
 To conclude, the key point is that, in view of \eqref{etoile-m}, \eqref{def:T} and assumption $0<p<\frac 12 \frac \beta N$,
 $$
\tau ^{pN/\beta}\left[\monstre \right]\to 0, \quad \text{ as } \tau \to \infty,
 $$
 uniformly with respect to $x\in B_R$. Hence, in view of \eqref{qqch2} and up to enlarging $\tau _0>0$, we have $U(T,x)\geq 1-2\ep$ for any $\tau \geq \tau _0$, any $x\in B_R$. Hence, we deduce from \eqref{U} that
 $$
 u(\tau+T(\tau),x)\geq 1-2\ep, \quad \forall \tau\geq \tau _0, \forall x \in B_R,
 $$
 which in turn implies
 $$
 u(t,x)\geq 1-2\ep, \quad \forall t\geq t_0:=\tau _0+T(\tau _0), \forall x \in B_R.
 $$
This concludes the proof of Theorem \ref{th:hair}.
\end{proof}

\bibliographystyle{siam}  

%\bibliography{/Volumes/KINGSTON/ARTICLES/biblio}

\bibliography{biblio}

\def\cprime{$'$}
\begin{thebibliography}{10}

\bibitem{Alf-tails-preprint}
{\sc M.~Alfaro}, {\em Slowing allee effect vs. accelerating heavy tails in
  monostable reaction diffusion equations}, submitted, arXiv:1505.04626.

\bibitem{Alf-Cov-preprint}
{\sc M.~Alfaro and J.~Coville}, {\em Propagation phenomena in nonlocal
  monostable equations: traveling waves vs. acceleration}, in preparation.

\bibitem{Aro-Wei-78}
{\sc D.~G. Aronson and H.~F. Weinberger}, {\em Multidimensional nonlinear
  diffusion arising in population genetics}, Adv. in Math., 30 (1978),
  pp.~33--76.

\bibitem{Beb-Li-Li-97}
{\sc J.~W. Bebernes, C.~Li, and Y.~Li}, {\em Travelling fronts in cylinders and
  their stability}, Rocky Mountain J. Math., 27 (1997), pp.~123--150.

\bibitem{Bik-Lop-Wak-02}
{\sc M.~Birkner, J.~A. L{\'o}pez-Mimbela, and A.~Wakolbinger}, {\em Blow-up of
  semilinear {PDE}'s at the critical dimension. {A} probabilistic approach},
  Proc. Amer. Math. Soc., 130 (2002), pp.~2431--2442 (electronic).

\bibitem{Cha-Cha-Ros-06}
{\sc E.~Chasseigne, M.~Chaves, and J.~D. Rossi}, {\em Asymptotic behavior for
  nonlocal diffusion equations}, J. Math. Pures Appl. (9), 86 (2006),
  pp.~271--291.

\bibitem{Den-Lev-00}
{\sc K.~Deng and H.~A. Levine}, {\em The role of critical exponents in blow-up
  theorems: the sequel}, J. Math. Anal. Appl., 243 (2000), pp.~85--126.

\bibitem{Dur-96}
{\sc R.~Durrett}, {\em Probability: theory and examples}, Duxbury Press,
  Belmont, CA, second~ed., 1996.

\bibitem{Fis-37}
{\sc R.~A. Fisher}, {\em The wave of advance of advantageous genes}, Ann. of
  Eugenics, 7 (1937), pp.~355--369.

\bibitem{Fuj-66}
{\sc H.~Fujita}, {\em On the blowing up of solutions of the {C}auchy problem
  for {$u_{t}=\Delta u+u^{1+\alpha }$}}, J. Fac. Sci. Univ. Tokyo Sect. I, 13
  (1966), pp.~109--124 (1966).

\bibitem{Gar-Qui-10}
{\sc J.~Garc{\'{\i}}a-Meli{\'a}n and F.~Quir{\'o}s}, {\em Fujita exponents for
  evolution problems with nonlocal diffusion}, J. Evol. Equ., 10 (2010),
  pp.~147--161.

\bibitem{Gar-Ros-09}
{\sc J.~Garc{\'{\i}}a-Meli{\'a}n and J.~D. Rossi}, {\em On the principal
  eigenvalue of some nonlocal diffusion problems}, J. Differential Equations,
  246 (2009), pp.~21--38.

\bibitem{Gue-Kir-99}
{\sc M.~Guedda and M.~Kirane}, {\em A note on nonexistence of global solutions
  to a nonlinear integral equation}, Bull. Belg. Math. Soc. Simon Stevin, 6
  (1999), pp.~491--497.

\bibitem{Gui-Hua-15}
{\sc C.~Gui and T.~Huan}, {\em Traveling wave solutions to some reaction
  diffusion equations with fractional {L}aplacians}, Calc. Var. Partial
  Differential Equations, 54 (2015), pp.~251--273.

\bibitem{Hay-73}
{\sc K.~Hayakawa}, {\em On nonexistence of global solutions of some semilinear
  parabolic differential equations}, Proc. Japan Acad., 49 (1973),
  pp.~503--505.

\bibitem{Kap-63}
{\sc S.~Kaplan}, {\em On the growth of solutions of quasi-linear parabolic
  equations}, Comm. Pure Appl. Math., 16 (1963), pp.~305--330.

\bibitem{Kob-Sir-Tun-77}
{\sc K.~Kobayashi, T.~Sirao, and H.~Tanaka}, {\em On the growing up problem for
  semilinear heat equations}, J. Math. Soc. Japan, 29 (1977), pp.~407--424.

\bibitem{Kol-Pet-Pis-37}
{\sc A.~N. Kolmogorov, I.~G. Petrovsky, and N.~S. Piskunov}, {\em Etude de
  l'\'equation de la diffusion avec croissance de la quantit\'e de mati\`{e}re
  et son application \`{a} un probl\`{e}me biologique}, Bull. Univ. Etat
  Moscou, S\'er. Inter. A 1 (1937), pp.~1--26.

\bibitem{Lev-90}
{\sc H.~A. Levine}, {\em The role of critical exponents in blowup theorems},
  SIAM Rev., 32 (1990), pp.~262--288.

\bibitem{Mellet}
{\sc A.~Mellet, J.-M. Roquejoffre, and Y.~Sire}, {\em Existence and asymptotics
  of fronts in non local combustion models}, Commun. Math. Sci., 12 (2014),
  pp.~1--11.

\bibitem{Mit-Pok-01}
{\sc {\`E}.~Mitidieri and S.~I. Pokhozhaev}, {\em A priori estimates and the
  absence of solutions of nonlinear partial differential equations and
  inequalities}, Tr. Mat. Inst. Steklova, 234 (2001), pp.~1--384.

\bibitem{Qui-Sou-book}
{\sc P.~Quittner and P.~Souplet}, {\em Superlinear parabolic problems},
  Birkh\"auser Advanced Texts: Basler Lehrb\"ucher. [Birkh\"auser Advanced
  Texts: Basel Textbooks], Birkh\"auser Verlag, Basel, 2007.
\newblock Blow-up, global existence and steady states.

\bibitem{Sug-75}
{\sc S.~Sugitani}, {\em On nonexistence of global solutions for some nonlinear
  integral equations}, Osaka J. Math., 12 (1975), pp.~45--51.

\bibitem{Wei-81}
{\sc F.~B. Weissler}, {\em Existence and nonexistence of global solutions for a
  semilinear heat equation}, Israel J. Math., 38 (1981), pp.~29--40.

\bibitem{Xin-93}
{\sc J.~Xin}, {\em Existence and nonexistence of traveling waves and
  reaction-diffusion front propagation in periodic media}, J. Statist. Phys.,
  73 (1993), pp.~893--926.

\bibitem{Yan-Zho-Zhe-14}
{\sc J.~Yang, S.~Zhou, and S.~Zheng}, {\em Asymptotic behavior of the nonlocal
  diffusion equation with localized source}, Proc. Amer. Math. Soc., 142
  (2014), pp.~3521--3532.

\bibitem{Zha-Li-Wan-12}
{\sc G.-B. Zhang, W.-T. Li, and Z.-C. Wang}, {\em Spreading speeds and
  traveling waves for nonlocal dispersal equations with degenerate monostable
  nonlinearity}, J. Differential Equations, 252 (2012), pp.~5096--5124.

\bibitem{Zla-05}
{\sc A.~Zlato{\v{s}}}, {\em Quenching and propagation of combustion without
  ignition temperature cutoff}, Nonlinearity, 18 (2005), pp.~1463--1475.

\end{thebibliography}

\end{document}